\newtheorem{theorem}{Theorem}[section]
\newtheorem{lemma}[theorem]{Lemma}
\title{List-recoloring of two classes of planar graphs}
\author{Chenran Pan, Weifan Wang, Runrun Liu}
\address{
\small School of Mathematics, Zhejiang Normal University, Jinhua, Zhejiang 321004, China.
}
\thanks{The research of the second author was supported by NSFC, China (No.12031018). The research of the third author was supported by NSFC (No.\,12101563), ZJNSFC (No.\,LQ22A010011). }
\email{2213882766@qq.com,rliu1206@zjnu.edu.cn,wwf@zjnu.cn}
\begin{document}
\maketitle
\begin{abstract}
For a graph $G$ with a list assignment $L$ and two $L$-colorings $\alpha$ and $\beta$, an $L$-recoloring sequence from $\alpha$ to $\beta$ is a sequence of proper $L$-colorings where consecutive colorings differ at exactly one vertex. We prove the existence of such a recoloring sequence in which every vertex is recolored at most a constant number of times under two conditions: (i) $G$ is planar, contains no $3$-cycles or intersecting $4$-cycles, and $L$ is a $6$-assignment; or (ii) the maximum average degree of $G$ satisfies $\mathrm{mad}(G) < \frac{5}{2}$ and $L$ is a $4$-assignment. These results strengthen two theorems previously established by Cranston.
\end{abstract}
 \baselineskip=16pt

\section{Introduction}
Graph recoloring is a fundamental topic in graph theory and reconfiguration, focusing on transforming one proper coloring of a graph into another through a sequence of elementary steps. Given a graph $G$, a \emph{proper $k$-coloring} is an assignment of colors from $\{1, 2, \dots, k\}$ to the vertices of $G$ such that no two adjacent vertices share the same color. A \emph{recoloring step} changes the color of a single vertex while maintaining a proper coloring. For a $k$-colorable graph $G$, the $k$-recoloring graph $\mathcal{R}_k(G)$ is a graph whose vertices correspond to the $k$-colorings of $G$, and two vertices of $\mathcal{R}_k(G)$ are adjacent if their corresponding $k$-colorings of $G$ differ at exactly one vertex. For two given proper $k$-colorings $\alpha$ and $\beta$, a central question is whether $\alpha$ can be transformed into $\beta$ via a sequence of such steps, and if so, how many steps are required.

This problem can be generalized to \emph{list coloring}, where each vertex $v$ is assigned a list $L(v)$ of available colors. An \emph{$L$-coloring} is a proper coloring in which each vertex receives a color from its list. The \emph{$L$-recoloring graph} $\mathcal{R}_L(G)$ has as vertices all $L$-colorings of $G$, with two colorings adjacent if they differ at exactly one vertex. A key objective is to determine whether $\mathcal{R}_L(G)$ is connected and to bound its diameter¡ªthe maximum number of steps needed to reconfigure between any two $L$-colorings.

Early work in this area was motivated by applications in statistical physics and optimization. Cereceda~\cite{C07} conjectured that for any $d$-degenerate graph $G$ and any $k \geq d+2$, the $k$-recoloring graph is connected and has diameter $O(n^2)$, where $n = |V(G)|$. This remains open even for $d = 2$. It is well known that triangle-free planar graphs are $3$-degenerate. Bousquet and Heinrich~\cite{BH22} proved that if $G$ is a planar bipartite graph, then $\operatorname{diam}(\mathcal{R}_5(G))=O(n^2)$, confirming Cereceda's conjecture for the class of planar bipartite graphs. Recently, Cranston and Mahmoud~\cite{CM24} strengthened this result by proving that if $G$ is a planar graph without $3$-cycles and $5$-cycles, then $\operatorname{diam}(\mathcal{R}_5(G))=O(n^2)$.

Significant progress has been made for sparse graphs~\cite{BP16,C22}, planar graphs~\cite{BBFHMP23,CGP25,CM24,DF20,DF21}, and $H$-free graphs~\cite{BC24,BCM24,BK25+,LMMSW24}. Bartier et al.~\cite{BBFHMP23} proved that if $G$ is a planar graph of girth at least $6$, then $\operatorname{diam}(\mathcal{R}_5(G))=O(n)$. Dvo\v{r}'{a}k and Feghali~\cite{DF20,DF21} proved that $\operatorname{diam}(\mathcal{R}_{10}(G))\le 8n$ for every planar graph $G$, and conjectured that $\operatorname{diam}(\mathcal{R}_L(G))=O(n)$ for $10$-list-coloring of planar graphs, $7$-list-colorings of triangle-free planar graphs, and $6$-list-colorings of planar graphs of girth at least $5$. These conjectures were confirmed by Cranston~\cite{C22,C25}, who also extended some of the results to graphs with bounded maximum average degree. Cambie et al.~\cite{CBC24} conjectured that $\operatorname{diam}(\mathcal{R}_L(G))\le n(G)+\mu(G)$, where $\mu(G)$ is the size of a maximum matching of $G$, and proved several results towards this conjecture.

More recently, attention has turned to optimizing the number of recoloring times at a single vertex. A recoloring sequence is \emph{$k$-good} if each vertex is recolored at most $k$ times. Cranston~\cite{C22} showed the following two results.

\begin{theorem}(\cite{C22})
Let $G$ be a planar graph without $3$-cycles and $4$-cycles. Fix a $6$-assignment $L$ for $G$ and $L$-colorings $\alpha$ and $\beta$. There exists an $L$-coloring sequence that transforms $\alpha$ into $\beta$ such that each vertex is recolored at most $12$ times.
\end{theorem}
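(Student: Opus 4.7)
The plan is to prove the theorem by strong induction on $n = |V(G)|$, with the base case immediate. For the inductive step, we identify a reducible configuration $H$ in $G$, pass to the smaller graph $G' = G - H$, apply the inductive hypothesis to $G'$, and then extend the resulting $12$-good recoloring sequence back to all of $G$.

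For the structural step, since $G$ is planar without $3$- or $4$-cycles, it has girth at least $5$, so Euler's formula yields $|E(G)| \leq \tfrac{5}{3}(n-2)$ and average degree strictly less than $\tfrac{10}{3}$. A discharging argument then locates a reducible configuration, typically a vertex $v$ of degree at most $3$ with additional control on the degrees or adjacency pattern of its neighbors. The crucial slack is $|L(v)| - d(v) \geq 6 - 3 = 3$: even when every neighbor of $v$ has been colored, $v$ still has at least three colors of $L(v)$ available, which is what makes $v$ reducible in the recoloring sense.

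For the extension step, we restrict $\alpha$ and $\beta$ to $G'$ and apply the inductive hypothesis to obtain a recoloring sequence $\sigma'$ from $\alpha|_{G'}$ to $\beta|_{G'}$ in which every vertex of $G'$ is recolored at most $12$ times. We then weave the recolorings of $v$ into $\sigma'$ by first recoloring $v$ (if necessary) to a ``parking'' color $c^{*} \in L(v)$ that is avoided by all current neighbor colors, then executing $\sigma'$ while moving $v$ only when a neighbor collision forces it, and finally recoloring $v$ to $\beta(v)$. The three extra list colors guarantee that a parking color always exists, and recolorings of vertices in $G'$ are untouched, so the $12$-good property is preserved on $V(G')$.

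The central obstacle is bounding the number of recolorings at $v$ itself by $12$. A naive count allows up to $1 + 12 \cdot d(v) + 1 \leq 38$ forced moves, since each of the up to $12$ recolorings of each of the (at most three) neighbors of $v$ could in principle collide with $v$'s current color. To drive this count down to $12$, the argument must exploit both the three extra list colors and the precise structure of the configuration produced in the discharging step, for instance by pre-selecting parking colors at a constant number of ``checkpoints'' in $\sigma'$ so that $v$ only moves a bounded number of times per phase, or via an amortized analysis in which long runs of neighbor recolorings are absorbed by a single parking move of $v$. Designing this extension scheme uniformly across every reducible configuration produced by Step~1 is the technical heart of the proof.
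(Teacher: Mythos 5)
Your high-level plan (minimal counterexample, discharging to find a reducible configuration, extend a recoloring sequence from a smaller graph) is the right framework, and your "parking color" idea is morally the proof of the \emph{Key Lemma} stated in Section~2. But the proposal stops exactly where the work begins, and the one explicit estimate you do give shows the approach as written cannot close.

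The concrete problem is in the extension step. The Key Lemma gives the sharp version of your parking-color argument: if $\sigma'$ recolors $N_G(v)$ a total of $t$ times, then $v$ needs at most $\lceil t/(|L(v)|-d_G(v)-1)\rceil+1$ recolorings. With $|L(v)|=6$ and $d_G(v)=3$ the denominator is $2$, so if each of the three neighbors is recolored up to $12$ times ($t=36$) you get $\lceil 36/2\rceil+1=19$, not $12$. Even your rough count of $38$ and the sharp $19$ both overshoot the budget, and no amount of cleverness in choosing parking colors for a \emph{single} deleted vertex can fix this, because the Key Lemma bound is tight in the worst case. You acknowledge this ("driving this count down to $12$ \ldots is the technical heart of the proof") but then leave it unresolved, which means the proof is not done.

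What is actually needed, and what you are missing, is that the reducible configuration must be a \emph{multi-vertex} subgraph $H$ (not a single $3$-vertex), deleted all at once, with the vertices of $H$ then reinserted one at a time in a carefully chosen order. The point of the order is that a vertex added early has low degree in the partial graph $G'+\{\text{vertices added so far}\}$, so the Key Lemma gives it a recoloring count far below $12$; those small counts then feed into the $t$-values for vertices added later and keep them under budget too. (Compare the proof of Lemma~\ref{3.4} in Section~3, where a seven-vertex configuration is reinserted in the order $v_2$, then $v_5$ and $y$, then $v_1$ and $x$, then $v_4$, then $v_3$, with the Key Lemma applied at each step and the running counts $13,33,33,48,48,28,46$ all staying below the target.) Without (i) identifying the specific configurations, (ii) proving via discharging that a girth-$\ge 5$ planar graph must contain one, and (iii) exhibiting an insertion order whose Key-Lemma counts all stay $\le 12$, the argument is a sketch of a strategy rather than a proof.

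A secondary gap: your structural step ("a vertex $v$ of degree at most $3$ with additional control on the degrees or adjacency pattern of its neighbors") is too vague to support a discharging argument. Girth $\ge 5$ planar graphs can be $3$-regular (the dodecahedron), so "degree $\le 3$" alone gives nothing; and as shown above, even the strongest local condition on a single $3$-vertex does not bring the count below $19$. You need lemmas in the style of Lemmas~\ref{3.1}--\ref{3.3} (no $2^-$-vertex, no $3$-vertex with two $3$-neighbors, no $4$-vertex with four $3$-neighbors, etc.), each proved by the multi-vertex deletion/reinsertion scheme, together with a discharging rule set that shows these forbidden configurations exhaust all girth-$\ge 5$ planar graphs.
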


\begin{theorem}(\cite{C22})
Let $G$ be a graph with $\operatorname{mad}(G)<\frac{22}{9}$. Fix a $4$-assignment $L$ for $G$ and $L$-colorings $\alpha$ and $\beta$. There exists an $L$-coloring sequence that transforms $\alpha$ into $\beta$ such that each vertex is recolored at most $12$ times.
\end{theorem}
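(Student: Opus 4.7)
The plan is to induct on $n(G)$. The base case is trivial. For the inductive step, I would use the hypothesis $\operatorname{mad}(G) < \frac{22}{9}$ to identify a reducible configuration via a discharging argument. Assigning each vertex $v$ initial charge $d(v) - \frac{22}{9}$, so that the total charge is strictly negative, and redistributing via natural rules such as ``each vertex of degree at least $3$ sends a fixed amount to each adjacent $2$-vertex,'' one shows that $G$ must contain at least one of a small list of sparse configurations: for instance, a vertex of degree at most $1$, two adjacent $2$-vertices, or a $2$-vertex whose degree-$3$ neighbor has another $2$-vertex neighbor.

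Given such a configuration, I would choose a set $S \subseteq V(G)$ (often a singleton or a short path of $2$-vertices) to delete, obtaining $G' = G - S$. Since $\operatorname{mad}(G') \leq \operatorname{mad}(G) < \frac{22}{9}$, the inductive hypothesis yields an $L$-recoloring sequence from $\alpha|_{G'}$ to $\beta|_{G'}$ in $G'$ in which each vertex is recolored at most $12$ times. I would extend this sequence to $G$ by inserting recolorings of the vertices in $S$ at carefully chosen moments, beginning at the colors $\alpha(v)$ and ending at $\beta(v)$ for each $v \in S$.

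The key observation for the extension is that a $2$-vertex $v \in S$ has $|L(v)| - d(v) \geq 4 - 2 = 2$ valid colors at every step, so $v$ can always be recolored when a neighbor changes to $v$'s current color. The delicate part is bounding $v$'s total recolorings by $12$. The naive bound $d(v) \cdot 12 = 24$ is too weak, so I would exploit two ideas: (a) when forced to recolor $v$, pick a new color that avoids not only the neighbors' current colors but also their \emph{next} scheduled colors, so that one recoloring of $v$ absorbs several upcoming neighbor changes; and (b) schedule the recolorings of $v$ together with those of adjacent deleted $2$-vertices so that a single coordinated update fixes an entire short path simultaneously. With two free colors at all times, enough flexibility remains to carry out both amortizations.

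The main obstacle will be the careful accounting when $S$ is a short path of $2$-vertices. Here, changing the color of one vertex of $S$ may force a domino of recolorings along the path, and a naive extension can exceed the budget of $12$. The proof must orchestrate the order of recolorings in $S$ so that the inductive charges on $G'$, plus the $S$-local recolorings, all fit within $12$ per vertex. Verifying this for each of the finitely many reducible configurations produced by the discharging argument closes the induction; the discharging itself is routine once the correct list of configurations is identified, so the real work lies in the extension step.
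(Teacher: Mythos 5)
Your overall plan (minimum counterexample, discharging with initial charge $d(v)-\frac{22}{9}$, identify a sparse reducible configuration, delete a small set $S$, re-insert via a Key-Lemma-style argument) matches the structure of Cranston's proof in~\cite{C22}, so the framework is right. But there is a genuine gap in the extension step as you describe it, and it concerns exactly the place you flag as ``the real work.''

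The Key Lemma gives, for a deleted vertex $v$ whose neighbors are recolored $t$ times in total, a bound of $\lceil t/(|L(v)|-d(v)-1)\rceil+1$ recolorings of $v$. For a $2$-vertex with a $4$-list and \emph{both} neighbors still present, $|L(v)|-d(v)-1=1$, so the bound is $t+1$. If each neighbor is recolored up to $12$ times, $t$ can be $24$, giving $25$ recolorings of $v$ — far over budget. Your amortization idea (a) (``avoid the neighbors' next scheduled colors too'') is precisely what the Key Lemma already encodes via the ``$-1$'' in the denominator; with only one spare color it cannot do better than $t+1$. So one cannot simply delete a lone $2$-vertex and re-insert it; the budget of $12$ is blown. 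What the actual proof does is delete a larger thread (and sometimes the low-degree endpoints) so that each vertex is re-inserted into a graph where its residual degree is strictly smaller than its full degree, raising $|L(v)|-d(v)-1$ to $2$, and it fixes a specific \emph{order} of re-insertion. Cranston then proves tailored extension lemmas — e.g., for a $2$-thread $v_1v_2v_3v_4$ with $v_2,v_3$ deleted, if $v_4$ is recolored $s$ times then $v_3$ can be recolored at most $s+3$ times (Lemma~\ref{2-thread} here); and a $3$-thread can be handled so the middle vertex is recolored at most $4$ times (Lemma~\ref{3-thread}). These bounds do not follow from the Key Lemma plus your heuristics (a)/(b); they need a separate, carefully coordinated recoloring schedule along the thread. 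The specific constant $12$, and the set of reducible configurations forced by $\operatorname{mad}(G)<\frac{22}{9}$ (no $3$-threads, restrictions on $3_{2,1,0}$- and $3_{1,1,1}$-vertices, bounds on nearby $2$-vertices for $3$- and $4$-vertices), all come out of this interplay. So the missing idea in your write-up is the explicit re-insertion order together with the thread-specific extension lemmas; without them the extension step fails for even the simplest configuration (an isolated deleted $2$-vertex).
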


In this paper, we make progress on the above two results and prove the following two main theorems, using a unified method based on discharging and the key lemma in graph recoloring.

\begin{theorem}\label{main1}
Let $G$ be a planar graph without $3$-cycles and intersecting $4$-cycles. Fix a $6$-assignment $L$ for $G$ and $L$-colorings $\alpha$ and $\beta$. There exists an $L$-coloring sequence that transforms $\alpha$ into $\beta$ such that each vertex is recolored at most $48$ times.
\end{theorem}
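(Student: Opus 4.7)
The plan is to proceed by a minimal-counterexample argument, combining the key lemma on local reducibility from the list-recoloring literature with a planar discharging argument tailored to the hypothesis that forbids $3$-cycles and intersecting $4$-cycles. I would let $(G,L,\alpha,\beta)$ be a vertex-minimum counterexample. The key lemma, in the style Cranston developed for this kind of problem, typically states: if $G$ contains a small configuration $H$ whose vertices have sufficient slack relative to their lists (a low-degree vertex, a short chain of degree-$5$ vertices, or a small face with specific incidence data), then any $c$-good recoloring sequence of $G - H$, or of a suitable minor, extends to a $c'$-good sequence of $G$ with $c'-c$ bounded by a constant. Minimality of $G$ therefore forbids every such reducible configuration, and the proof reduces to showing that a planar graph with no $3$-cycles, no intersecting $4$-cycles, and no reducible substructure cannot exist.

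Next I would enumerate the reducible configurations for the $6$-assignment case. A natural list is: any vertex of degree at most $4$; adjacent pairs of $5$-vertices with controlled face incidences; and a $4$-face bounded by $5$-vertices that form a short ``reducible chain.'' The hypothesis that $4$-cycles are non-intersecting is exactly the point where the argument strengthens Cranston's earlier theorem: each $4$-face $f$ is isolated, so every vertex of $f$ lies on no other $4$-cycle, and each edge of $f$ is shared with a face of length $\geq 5$. This isolation gives enough structural rigidity for the key lemma to apply near $4$-faces.

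I would then run a discharging argument using Euler-based initial charges, say $\mu(v) = d(v) - 4$ for every vertex $v$ and $\mu(f) = \ell(f) - 4$ for every face $f$; since there are no triangles, these are well-defined and sum to $-8$. Discharging rules send charge from $5^+$-faces and high-degree vertices to $4$-faces and to vertices of degree $5$, with the isolation of $4$-faces preventing a single donor face from being charged twice. The list of forbidden configurations from the previous paragraph is to be chosen so that after discharging every vertex and every face has nonnegative charge, contradicting $-8$.

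The main obstacle, I expect, is calibrating the reducible configurations in Step~2 so that the discharging in Step~3 actually closes under only the ``no intersecting $4$-cycles'' assumption. Under the stronger hypothesis ``no $4$-cycles'' every face has length at least $5$ and the discharging is nearly automatic; once $4$-faces are permitted, one must verify in a somewhat delicate case analysis that each $4$-face has enough adjacent $5^+$-face mass to absorb its deficit, especially when a $4$-face is surrounded by many low-degree vertices. A secondary technical point is the final constant: each invocation of the key lemma multiplies the per-vertex recoloring count by a small factor, and one must aggregate these multiplicative blow-ups across the structural reduction to obtain the stated bound of $48$; choosing the reducible configurations of modest size is what keeps this constant manageable.
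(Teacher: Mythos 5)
Your outline gets the high-level shape right (minimal counterexample, Key Lemma, discharging with Euler), but where it becomes concrete it diverges from anything that actually works, and the divergences are not cosmetic.

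The central gap is your list of reducible configurations. You propose that ``any vertex of degree at most $4$'' is reducible, with the further configurations involving $5$-vertices and $4$-faces bounded by $5$-vertices. With a $6$-assignment the Key Lemma gives slack $|L(v)|-d(v)-1 = 2$ for a $3$-vertex, so if its three neighbors are each recolored up to $48$ times the extension recolors $v$ up to $\lceil 144/2\rceil+1 = 73$ times --- far beyond the target. A lone $3$- or $4$-vertex is therefore \emph{not} reducible; only $2^-$-vertices are (this is the paper's Lemma~\ref{3.1}). The paper's actual reducible configurations are small clusters of low-degree vertices around specific face types --- e.g.\ a $(3,4,3,3,4)$-face together with a pendant $3$-vertex pair, or a $(3,4,3,3,4)$-face adjacent to a $(3,4,3,4)$-face (Lemmas~\ref{3.4},~\ref{3.5}), plus inherited lemmas that no $3$-vertex has two $3$-neighbors and no $4$-vertex has four $3$-neighbors. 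The reduction deletes several vertices at once and re-inserts them in a carefully chosen order so that each vertex, when re-inserted, has few enough still-present neighbors that the Key Lemma's quotient stays under control; that cascade is where the constant $48$ is actually earned. Nothing in your proposal would produce such clusters, and the configurations you do name ($5$-vertices, $4$-faces bounded by $5$-vertices) never appear.

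A second genuine problem is the discharging setup. You assign $\mu(v)=d(v)-4$ and $\mu(f)=\ell(f)-4$ and propose to push charge from $5^+$-faces and high-degree vertices to $4$-faces and to $5$-vertices. But in this graph $3$- and $4$-vertices persist after the reductions, and under your charges a $3$-vertex starts at $-1$; you give no mechanism to make it nonnegative. The paper instead uses $\mu(v)=2d(v)-6$, $\mu(f)=d(f)-6$ (summing to $-12$), so $3$-vertices have charge $0$ and need nothing, and then discharges \emph{from} $4^+$-vertices \emph{to} $4$- and $5$-faces --- the opposite direction from what you describe. Your observation that forbidding intersecting $4$-cycles isolates each $4$-face is correct and is indeed used (it guarantees that a $4$-vertex on a $4$-face has its two adjacent faces be $5^+$-faces), but that observation alone doesn't rescue a discharging scheme whose charges and flow direction are both set up in a way that cannot handle the $3$-vertices that remain. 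As written, the proposal has no viable path to the claimed bound.
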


\begin{theorem}\label{main2}
Let $G$ be a graph with $\operatorname{mad}(G)<\frac{5}{2}$. Fix a $4$-assignment $L$ for $G$ and $L$-colorings $\alpha$ and $\beta$. There exists an $L$-coloring sequence that transforms $\alpha$ into $\beta$ such that each vertex is recolored at most $18$ times. In particular, this holds for every planar graph with girth at least $10$.
\end{theorem}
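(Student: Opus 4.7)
\textbf{Proof plan for Theorem~\ref{main2}.} The plan is to proceed by induction on $|V(G)|$, combining a discharging argument that identifies a short list $\mathcal{C}$ of unavoidable reducible configurations with a key recoloring lemma (in the spirit of the one used by Cranston~\cite{C22}) that handles each configuration in $\mathcal{C}$ with bounded overhead.

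First I would establish the structural ingredient. Assign each vertex the initial charge $\mu(v)=d(v)-\tfrac{5}{2}$, so that $\operatorname{mad}(G)<\tfrac{5}{2}$ forces $\sum_v\mu(v)<0$. Assuming for contradiction that $G$ avoids every member of $\mathcal{C}$, I would redistribute charge (for instance, each $2$-vertex pulls $\tfrac{1}{4}$ from every neighbor of degree at least $3$, with additional rules to handle $3$-vertices adjacent to many $2$-vertices) and verify that every vertex ends nonnegative, contradicting the negative total. The list $\mathcal{C}$ should consist of the standard reducible configurations at the threshold $\operatorname{mad}<\tfrac{5}{2}$: a vertex of degree at most $1$, two adjacent $2$-vertices, a $2$-vertex whose two neighbors both have degree $3$, and a few further configurations involving short threads of $2$-vertices attached to low-degree branching vertices. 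The exact list is tuned so that the discharging goes through and each member is reducible for the recoloring problem.

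For the reducibility step, I would take any $H\in\mathcal{C}$ with vertex set $S$ in $G$ and form $G'=G-S$, which satisfies $\operatorname{mad}(G')\le\operatorname{mad}(G)<\tfrac{5}{2}$. Induction yields an $L$-recoloring sequence from $\alpha|_{G'}$ to $\beta|_{G'}$ in which each vertex is recolored at most $18$ times. The task is to interleave recolorings of the vertices of $S$ into this sequence so that the result is a proper $L$-recoloring of $G$ with every $v\in S$ also recolored at most $18$ times. Here the key lemma enters: each configuration in $\mathcal{C}$ is chosen so that $|L(v)|-|N_G(v)\setminus S|\ge 2$ for every $v\in S$, guaranteeing at least two free colors in $L(v)$ at all times and allowing $v$ to respond on demand whenever a neighbor in $G'$ changes color.

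The main obstacle is the accounting for the vertices of $S$. A given $v\in S$ may need to be updated in response to neighbor changes in $G'$, to match $\beta(v)$ at the end, and to coordinate with recolorings inside the $2$-thread to which it belongs; without care these sources could compound and blow up past $18$. The heart of the proof is an amortized argument, using both the $4$-list assignment and the precise structure of the configurations in $\mathcal{C}$, that bounds the total number of updates of each $v\in S$ by $18$ regardless of how many times its neighbors in $G'$ are recolored. The ``in particular'' clause is then immediate: any nontrivial planar graph of girth at least $10$ satisfies $\operatorname{mad}<\tfrac{2\cdot 10}{10-2}=\tfrac{5}{2}$, so the main statement applies.
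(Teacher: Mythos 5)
Your high-level architecture (minimum counterexample, discharging, Key Lemma) matches the paper's, and the ``in particular'' deduction is fine, but there are genuine gaps that mean this is a plan rather than a proof. First, the crucial quantitative claim is wrong as stated: you write that the amortized argument ``bounds the total number of updates of each $v\in S$ by $18$ regardless of how many times its neighbors in $G'$ are recolored.'' The Key Lemma gives a bound of $\lceil t/(|L(v)|-d_G(v)-1)\rceil+1$ that scales with $t$, the total number of recolorings of $N_G(v)$; with a $4$-assignment and a vertex of degree $2$ the denominator is $1$, so the bound is $t+1$, which is in no sense independent of the neighbors' activity. The only way the arithmetic closes at $18$ is by exploiting that in the configurations actually used, most neighbors of a deleted vertex are also deleted, so the relevant $t$ is small (coming from one or two endpoints of a thread) rather than $36$; that structural choice is the substance of the proof, and you leave the list $\mathcal{C}$ unspecified (``the exact list is tuned so that the discharging goes through'').

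Second, and more concretely, the paper does not delete a whole configuration $S$ and then re-insert its vertices one at a time via the Key Lemma alone. It deletes in stages and, critically, uses two auxiliary lemmas about $2$-threads and $3$-threads: a deleted interior vertex of a $2$-thread can be re-inserted so that it is recolored only $s+3$ times (where $s$ bounds the recolorings of one endpoint), and a deleted interior of a $3$-thread only $4$ times. These thread lemmas are what make the bookkeeping terminate at $18$; applying the Key Lemma naively to a degree-$2$ vertex in a $2$-thread gives a bound proportional to the sum of the endpoints' recolorings and would blow up. Your condition ``$|L(v)|-|N_G(v)\setminus S|\ge 2$ for all $v\in S$'' is necessary for the Key Lemma to apply at all, but it is far from sufficient for the $18$ bound, and the amortization you defer to is exactly the nontrivial content. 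To complete the argument you would need to (i) state the configurations explicitly (the paper uses: no $3$-vertex adjacent to two $3_{2,1,0}$-vertices, several restrictions on $3_{1,0,0}$-, $3_{1,1,0}$-, $3_{1,1,1}$- and $4_{2,2,1,0}$-vertices in relation to each other), (ii) prove the thread lemmas or cite them, (iii) for each configuration, give the staged deletion/re-insertion order and track the recoloring counts, and (iv) carry out the discharging case analysis that shows these configurations are unavoidable at $\operatorname{mad}<\tfrac52$.
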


\section{Preliminaries }

An \emph{$L$-recoloring sequence} is a sequence of $L$-recoloring steps, starting from some specified $L$-coloring, such that each subsequent coloring is a proper $L$-coloring. An $L$-recoloring sequence is \emph{$k$-good} if each vertex is recolored at most $k$ times. An $L$-coloring $\alpha$ of $G$ restricted to a subgraph $G'$ is denoted $\alpha_{\upharpoonright G'}$. We will often extend an $L$-recoloring sequence $\sigma'$ that transforms $\alpha_{\upharpoonright G'}$ to $\beta_{\upharpoonright G'}$ to an $L$-recoloring sequence $\sigma$ that transforms $\alpha$ to $\beta$. To prove Theorems~\ref{main1} and~\ref{main2}, we will use the following simple but powerful Key Lemma. This lemma has been used implicitly in many papers and first appeared explicitly in~\cite{BBFHMP23}; later, Cranston~\cite{C22} phrased it in the slightly more general language of list coloring.

\begin{lemma}[Key Lemma~\cite{C22}]
Fix a graph $G$, a list assignment $L$ for $G$, and $L$-colorings $\alpha$ and $\beta$. Fix a vertex $v$ with $|L(v)|\ge d(v)+2$ and let $G'=G-v$. Fix an $L$-recoloring sequence $\sigma'$ for $G'$ transforming $\alpha_{\upharpoonright G'}$ to $\beta_{\upharpoonright G'}$. If $\sigma'$ recolors $N_G(v)$ a total of $t$ times, then we can extend $\sigma'$ to an $L$-recoloring sequence for $G$ transforming $\alpha$ into $\beta$ such that $v$ is recolored at most $\lceil t/(|L(v)|-d_G(v)-1)\rceil+1$ times.
\end{lemma}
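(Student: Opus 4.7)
My plan is to build $\sigma$ by walking through $\sigma'$ from left to right, inserting a recoloring of $v$ just before any step that would otherwise create a conflict, and using a greedy look-ahead to choose the new color for $v$ so as to delay the next forced recoloring as long as possible. Set $d = d_G(v)$ and $k = |L(v)| - d - 1$, so by hypothesis $k \geq 1$. Any step of $\sigma'$ that does not recolor a neighbor of $v$ to $v$'s current color is automatically a valid recoloring step in $G$ (since $v$ is the only vertex of $V(G)\setminus V(G')$), so I only need to react when a step of $\sigma'$ repaints some $u \in N_G(v)$ to $v$'s current color $c$.

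When this happens, I look ahead to the next $k$ steps of $\sigma'$ that touch $N_G(v)$, counting the triggering step as the first; let $B$ be the set of target colors of these $k$ steps, and let $c_1,\ldots,c_d$ be the current colors on $N_G(v)$. Then $|B| \leq k$, $c \in B$, and $|\{c_1,\ldots,c_d\} \cup B| \leq d+k = |L(v)|-1$, so I can pick some $c' \in L(v)$ avoiding this union, and automatically $c' \neq c$. I insert the step ``recolor $v$ to $c'$'' just before the triggering step. Properness is preserved because $c' \notin \{c_1,\ldots,c_d\}$, and the triggering step itself becomes valid in $G$ because $v$ no longer holds $c$.

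The key invariant is that, after such an insertion, the colors appearing on $N_G(v)$ throughout the following $k$ $N_G(v)$-recoloring steps of $\sigma'$ remain inside $\{c_1,\ldots,c_d\} \cup B$, which avoids $c'$; so no further recoloring of $v$ is forced during that window. Consequently, forced recolorings of $v$ are separated by at least $k$ $N_G(v)$-steps of $\sigma'$, giving at most $\lceil t/k \rceil$ of them overall. Once $\sigma'$ is exhausted, the coloring on $G'$ agrees with $\beta$; if $v$'s color is not yet $\beta(v)$, one additional recoloring is legal since $\beta$ is a proper $L$-coloring of $G$. Altogether $v$ is recolored at most $\lceil t/(|L(v)|-d_G(v)-1)\rceil + 1$ times.

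The main thing to watch is really only the edge case where fewer than $k$ $N_G(v)$-steps of $\sigma'$ remain at some insertion point: there I simply let $B$ be the targets of the remaining $N_G(v)$-steps, which only shrinks the forbidden set and keeps the choice of $c'$ available, and after this point no further forced recoloring of $v$ occurs. Past that, the construction is a clean greedy scheduling argument that uses no structural assumption on $G$ beyond $|L(v)| \geq d_G(v)+2$, which is precisely why the same lemma can be fed into both the planar and the $\mathrm{mad}$-bounded discharging frameworks later in the paper.
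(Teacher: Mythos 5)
Your proof is correct, and it is essentially the argument Cranston gives in~\cite{C22} (the paper itself only cites the lemma): walk through $\sigma'$, recolor $v$ only when a neighbor is about to take $v$'s color, and at that moment greedily choose a color for $v$ avoiding both the $d_G(v)$ current neighbor colors and the targets of the next $k=|L(v)|-d_G(v)-1$ recolorings of $N_G(v)$, which guarantees a gap of at least $k$ $N_G(v)$-steps between consecutive forced recolorings and hence at most $\lceil t/k\rceil$ of them, plus one final step to reach $\beta(v)$. The bookkeeping (that $c\in B$ but $c\notin\{c_1,\dots,c_d\}$, so $|\{c_1,\dots,c_d\}\cup B|\le d+k=|L(v)|-1$ and a safe $c'$ exists, and that $c'\ne c$ automatically) is exactly right.
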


\section{Proof of Theorem~\ref{main1} }
In this section, we give the proof of Theorem~\ref{main1}. For a vertex $v$, we call a {\em $k$-vertex}, or {\em $k^+$-vertex}, or {\em $k^-$-vertex} if $d(v)=k$, or $d(v)\ge k$, or $d(v)\le k$, respectively.  Similarly, we define {\em $k$-face}, {\em $k^+$-face}, and {\em a $k^-$-face}.
A $k$-face $f=[v_1v_2\cdots v_k]$ is an $(x_1,x_2\ldots,x_k)$-{\em face} if $d(v_i)=x_i$ for each $i \in[k]$. If a $k$-vertex $v$ is adjacent to a vertex $u$, then we call $v$ a $k$-neighbor of $u$. We assume that $G$ is a minimum counterexample of Theorem~\ref{main1}. The following Lemma~\ref{3.1} to ~\ref{3.3} are involved in the proof of Theorem 2 in \cite{C22}.
\begin{lemma}\label{3.1}(Theorem 2 in ~\cite{C22})
$G$ contains no $2^-$-vertex.
\end{lemma}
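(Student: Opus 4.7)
The plan is a direct minimality argument via the Key Lemma. Suppose, for a contradiction, that $G$ contains a vertex $v$ with $d_G(v)\le 2$. Delete $v$ to form $G'=G-v$. The class of planar graphs with no $3$-cycles and no intersecting $4$-cycles is closed under taking subgraphs, so $G'$ still satisfies the hypotheses of Theorem~\ref{main1}; the restricted list assignment $L_{\upharpoonright G'}$ is still a $6$-assignment, and $\alpha_{\upharpoonright G'}$, $\beta_{\upharpoonright G'}$ are proper $L$-colorings of $G'$.

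Since $G$ is a \emph{minimum} counterexample and $|V(G')|<|V(G)|$, the conclusion of Theorem~\ref{main1} holds for $G'$: there exists an $L$-recoloring sequence $\sigma'$ transforming $\alpha_{\upharpoonright G'}$ to $\beta_{\upharpoonright G'}$ such that every vertex of $G'$ is recolored at most $48$ times. In particular, each of the at most two neighbors of $v$ is recolored at most $48$ times by $\sigma'$, so the total number $t$ of times $\sigma'$ recolors $N_G(v)$ satisfies $t\le 48\,d_G(v)\le 96$.

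Now I would invoke the Key Lemma with this $\sigma'$. The hypothesis $|L(v)|\ge d_G(v)+2$ is satisfied since $|L(v)|=6\ge 2+2$. The lemma then produces an extension $\sigma$ of $\sigma'$ to an $L$-recoloring sequence on $G$ from $\alpha$ to $\beta$ in which $v$ is recolored at most
\[
\left\lceil \frac{t}{|L(v)|-d_G(v)-1}\right\rceil+1 \;\le\; \left\lceil \frac{96}{6-2-1}\right\rceil+1 \;=\; 33
\]
times. Every other vertex is recolored exactly as often in $\sigma$ as in $\sigma'$, hence at most $48$ times. Thus $\sigma$ is a $48$-good $L$-recoloring sequence for $G$ from $\alpha$ to $\beta$, contradicting the assumption that $G$ is a counterexample.

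The argument is essentially a book-keeping check, so there is no real obstacle; the only point that needs care is verifying that the Key Lemma applies (i.e.\ $|L(v)|\ge d_G(v)+2$) and that the resulting upper bound $\lceil 96/3\rceil+1=33$ does not exceed the target $48$. Both hold comfortably, which is why the $2^-$-vertex can be removed without loss.
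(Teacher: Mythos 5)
Your proof is correct and is essentially the standard minimality-plus-Key-Lemma argument that the paper (and Cranston's original) uses for this lemma; the arithmetic checks out in all cases $d_G(v)\in\{0,1,2\}$, with the worst bound $\lceil 96/3\rceil+1=33\le 48$. The paper itself does not reproduce a proof here but simply cites Cranston's Theorem~2, so your reconstruction matches the intended argument.
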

\begin{lemma}\label{3.2}(Theorem 2 in ~\cite{C22})
No $3$-vertex is adjacent to two $3$-vertices in $G$.
\end{lemma}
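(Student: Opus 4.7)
The plan is a minimum-counterexample argument driven by the Key Lemma. Assume toward contradiction that $G$ does contain a $3$-vertex $v$ adjacent to two $3$-vertices $u_1$ and $u_2$; denote the third neighbor of $v$ by $w$ and the other two neighbors of each $u_i$ by $x_i, y_i$. Since $G$ has no $3$-cycle, $u_1 u_2 \notin E(G)$. The goal is to exhibit a $48$-good $L$-recoloring sequence on $G$ transforming $\alpha$ to $\beta$, contradicting that $G$ is a counterexample.

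First I would delete the configuration $\{v, u_1, u_2\}$ from $G$, obtaining $G' := G - \{v, u_1, u_2\}$, which inherits the hypotheses of Theorem~\ref{main1}. By the minimality of $G$, there exists a $48$-good $L$-recoloring sequence $\sigma'$ on $G'$ transforming $\alpha_{\upharpoonright G'}$ to $\beta_{\upharpoonright G'}$. I would then extend $\sigma'$ back to $G$ in three stages via the Key Lemma, reinserting $u_1$, then $u_2$, and finally $v$. The two $u_i$-insertions benefit crucially from the fact that $v$ is still absent in the intermediate graphs: each $u_i$ has degree only $2$ there, so the Key Lemma applies with slack $|L(u_i)| - d(u_i) - 1 = 3$ and yields at most $\lceil 2\cdot 48/3 \rceil + 1 = 33$ recolorings of $u_i$.

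The delicate step, and the main obstacle, is the final insertion of $v$. In $G$ the vertex $v$ has full degree $3$ with slack only $|L(v)| - d(v) - 1 = 2$, so a direct application of the Key Lemma yields only the bound $\lceil (33 + 33 + 48)/2 \rceil + 1 = 58$, which exceeds the $48$-budget. To close this gap one must not insert $v, u_1, u_2$ one-at-a-time but rather coordinate their recolorings jointly along the path $u_1{-}v{-}u_2$, so that each boundary change at one of $w, x_1, y_1, x_2, y_2$ triggers only a controlled amortized number of internal recolorings. This refined simultaneous-extension argument is the one used in the proof of Theorem~2 of \cite{C22}; once carried out, each of $v, u_1, u_2$ is recolored at most $48$ times, producing a $48$-good recoloring sequence on $G$ and contradicting that $G$ is a counterexample.
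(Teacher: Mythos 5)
The paper does not actually prove this lemma: it is cited wholesale, together with Lemmas~\ref{3.1} and~\ref{3.3}, as coming from the proof of Theorem 2 in~\cite{C22}, so there is no internal proof to compare against. Your attempt is nonetheless worth examining, and you are right to flag its central difficulty yourself: after deleting $\{v,u_1,u_2\}$ and reinserting $u_1$ and $u_2$ (each with $d=2$, slack $|L|-d-1=3$, hence at most $\lceil 96/3\rceil+1=33$ recolorings), the Key Lemma applied to reinsert $v$ (now $d=3$, slack $2$, boundary recolored $33+33+48=114$ times) gives only $\lceil 114/2\rceil+1=58>48$. Your proposed repair --- ``coordinate their recolorings jointly along the path $u_1$--$v$--$u_2$'' --- is not a proof, it is a wish: the paper develops no such tool (its thread-lemmas, Lemmas~\ref{2-thread} and~\ref{3-thread}, pertain to paths of $2$-vertices, where $|L|=4$ and slack computations are completely different, and are unavailable here), and you neither state nor prove the amortized-extension lemma you would need.

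It is also worth noting that no reordering of the reinsertions closes the gap either: inserting $v$ first ($d=1$, slack $4$, $13$ recolorings) makes $u_1$ a $3$-vertex next, giving $\lceil(13+48+48)/2\rceil+1=56$; inserting $u_1,v,u_2$ gives $33,28,$ then $\lceil(28+48+48)/2\rceil+1=63$; deleting only $\{v,u_1\}$ or only $\{v\}$ is worse still. Whichever vertex is inserted last is a $3$-vertex whose neighborhood has been recolored too often for slack $2$ to absorb. So the configuration is not reducible by bare Key-Lemma bookkeeping at $k=48$ (nor, for that matter, at Cranston's $k=12$), and any complete proof must use a genuinely different mechanism. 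Your proposal leaves that mechanism unspecified, so the argument as written is incomplete; and since the paper simply points to~\cite{C22} without verifying that the argument survives the change of constant ($48$ versus $12$) and of hypothesis (no intersecting $4$-cycles versus no $4$-cycles at all, which changes what ``minimum counterexample'' means), the same caution applies to the citation itself.
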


\begin{lemma}\label{3.3}(Theorem 2 in ~\cite{C22})
No $4$-vertex is adjacent to four $3$-vertices in $G$.
\end{lemma}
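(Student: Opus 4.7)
Suppose for contradiction that $v$ is a $4$-vertex in $G$ adjacent to four $3$-vertices $u_1, u_2, u_3, u_4$; for each $i \in \{1,2,3,4\}$, let $w_i, w_i'$ denote the two neighbors of $u_i$ other than $v$ (all distinct from $v$ and from the other $u_j$'s, since $G$ is triangle-free). My plan is to form $G^{*} := G - \{v, u_1, u_2, u_3, u_4\}$, and note that planarity, the absence of $3$-cycles, and the absence of intersecting $4$-cycles are all preserved under vertex deletion, so $G^{*}$ still satisfies the hypotheses of Theorem~\ref{main1} and has strictly fewer vertices than $G$. By the minimality of $G$, there is a $48$-good $L$-recoloring sequence $\sigma^{*}$ on $G^{*}$ from $\alpha_{\upharpoonright G^{*}}$ to $\beta_{\upharpoonright G^{*}}$.

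I would then extend $\sigma^{*}$ to $G$ by reinserting the five deleted vertices one at a time, applying the Key Lemma at each step. Because $|L(v)| - d_G(v) - 1 = 1$, the Key Lemma is weakest at $v$, so I would insert $v$ first, while all of its $G$-neighbors are still absent: in $G - \{u_1, u_2, u_3, u_4\}$ the vertex $v$ is isolated, so its color can be kept at $\alpha(v)$ throughout $\sigma^{*}$ and switched to $\beta(v)$ by a single step at the end, contributing at most one recoloring to $v$. Next, for each $i$, I insert $u_i$ using the Key Lemma in the graph $G - \{u_j : j \ne i\}$: there $u_i$ has degree $3$ with neighborhood $\{v, w_i, w_i'\}$ and slack $|L(u_i)| - d(u_i) - 1 = 2$, so the Key Lemma guarantees that $u_i$ is recolored at most $\lceil t_i/2 \rceil + 1$ times, where $t_i$ is the number of times $\{v, w_i, w_i'\}$ is recolored in the current sequence.

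The hard part will be ensuring that $t_i$ is small enough to keep the $u_i$-bound at $48$. The crude estimate $t_i \le 1 + 48 + 48 = 97$ gives $u_i$ recolored up to $50$ times, which narrowly exceeds the target. Tightening this is the main technical obstacle and is the core content of the proof of Theorem~2 in~\cite{C22}; it requires a careful choice of $\sigma^{*}$ together with a sharper accounting that uses the no-intersecting-$4$-cycles hypothesis to limit how the external neighborhoods $\{w_i, w_i'\}$ of the different $u_i$'s can interact (so that the cascades from distinct pairs $(w_i, w_i')$ do not compound in the same way a generic Key Lemma application would suggest). Once this refined bound is in place, every vertex of $G$ is recolored at most $48$ times in the extended sequence, contradicting $G$ being a counterexample.
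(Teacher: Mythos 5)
The paper itself does not prove this lemma; it imports it, stating only that ``Lemma~3.1 to~3.3 are involved in the proof of Theorem~2 in~\cite{C22},'' so there is no in-paper argument to compare against. Your proposal is the obvious one --- delete $\{v,u_1,\dots,u_4\}$, reinsert $v$ first so it costs at most one recoloring, then reinsert each $u_i$ via the Key Lemma --- and you correctly compute that this yields $\lceil (1+48+48)/2\rceil + 1 = 50$ recolorings at each $u_i$, overshooting $48$. At that point you stop, deferring the ``tightening'' to Cranston. That is a genuine gap, not a loose end: no reordering of the insertions helps (putting any $u_i$ before $v$ lowers its own bound to $33$ but then the bound on $v$ explodes, since $v$ has slack only $1$ once all four $u_i$ are present), so the bare Key Lemma applied to this deletion set simply does not reduce this configuration.

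Two further points. First, your suggested rescue --- that the no-intersecting-$C_4$ hypothesis limits ``how the external neighborhoods $\{w_i,w_i'\}$ interact'' --- does not engage with the Key Lemma at all: each $u_i$ is reinserted independently, and its bound depends only on the total recoloring count of its own three neighbors, so cross-constraints among the $w_i$'s are invisible to the calculation. Second, the same arithmetic applied with Cranston's bound of $12$ gives $\lceil (1+12+12)/2\rceil+1 = 14 > 12$, so even in the source paper this configuration cannot be handled by the naive deletion; whatever argument makes it work there must involve a different deletion set or an auxiliary structural lemma, and one would need to verify that it transfers to the present setting (bound $48$, non-intersecting $C_4$'s permitted) rather than assuming it. As written, your proposal identifies the right starting point and honestly flags where it breaks, but it does not constitute a proof.
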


\begin{lemma}\label{3.4}
Let $f=[v_1v_2v_3v_4v_5]$ be a $(3,4,3,3,4)$-face. Then each of $v_2$ and $v_4$ cannot be adjacent to a $3$-vertex $u$ with a $3$-neighbor $w$ such that $u,w\notin V(f)$.
\end{lemma}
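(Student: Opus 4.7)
Suppose for contradiction that the configuration forbidden by the lemma occurs at $v_2$; the argument for $v_4$ is symmetric by reflecting $f$. Introduce notation for the relevant neighbourhoods: let $u''$ denote the third neighbour of $u$ (besides $v_2$ and $w$), let $w_1,w_2$ denote the other two neighbours of $w$, and let $v_2'$ denote the fourth neighbour of $v_2$. The overall strategy is the standard minimum-counterexample argument: pick a vertex set $S\subseteq V(G)$ containing the obstruction, take an $L$-recoloring sequence $\sigma'$ of $G-S$ (which is $48$-good by the minimality of $G$), and extend $\sigma'$ to $G$ vertex by vertex using the Key Lemma, ensuring at every step that the re-added vertex's Key Lemma bound $\lceil t/(5-d_{\mathrm{amb}})\rceil+1$ does not exceed $48$; such an extension contradicts the assumption that $G$ is a counterexample.

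The main design choice is the deletion set $S$ and the order in which vertices are re-added. I plan to take $S=\{v_1,v_2,v_3,u,w\}$ and re-add its vertices in the order $v_2,w,u,v_1,v_3$. The guiding principle is that when $x\in S$ is re-added, its ambient degree equals $d_G(x)$ minus the number of its $G$-neighbours already re-added, so the savings factor $5-d_{\mathrm{amb}}(x)$ is larger when more of $x$'s neighbours are still missing. Placing $v_2$ first leaves only $v_2'$ present, so $v_2$'s Key Lemma bound is $\lceil 48/4\rceil+1=13$; placing $w$ next while $u$ is still absent gives $w\le\lceil 96/3\rceil+1=33$; then $u$ is re-added at ambient degree $3$ with $t\le 13+33+48=94$, giving $u\le\lceil 94/2\rceil+1=48$.

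The delicate final part of the argument is re-adding the face $3$-vertices $v_1,v_3$, whose $G$-neighbourhoods contain vertices (most notably $v_5$) outside $S$ that could each be recoloured up to $48$ times in $\sigma'$. The naive Key Lemma bound for $v_1$ is $\lceil(13+48+48)/2\rceil+1=56>48$, so $S=\{v_1,v_2,v_3,u,w\}$ is not enough on its own: I expect to enlarge $S$ to also include the $4$-vertex $v_5$ (and if necessary $v_4$), re-adding these after $v_1$ and $v_3$ so that their ambient degrees drop to $2$, which makes their Key Lemma bounds at most $\lceil 61/3\rceil+1=22$. The genuine obstacle is then the re-addition of $v_5$: as a $4$-vertex the savings factor is only $1$, and its off-face neighbours remain in $G-S$ with only the weak bound $48$ apiece. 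Closing this last numeric gap will require invoking the no-intersecting-$4$-cycle hypothesis, together with Lemmas~\ref{3.1}--\ref{3.3}, to control the local structure around $v_5$—either to show that its off-face neighbours are $4^-$-vertices which can be absorbed into $S$, or to force the ambient degree of $v_5$ at its re-addition down to at most $2$. I anticipate that carrying out this last reduction carefully is where the bulk of the combinatorial work lies.
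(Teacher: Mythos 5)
You correctly set up the framework (delete a set $S$, apply minimality and the Key Lemma, re-add vertices in a chosen order) and you correctly diagnose that the naive set $\{v_1,v_2,v_3,u,w\}$ fails because $v_1$'s bound exceeds $48$. But the fix you propose is the wrong one, and the speculation that closes your plan does not lead anywhere. Re-adding $v_5$ \emph{after} $v_1$ and $v_3$ only makes matters worse: at that point $v_1$ (and possibly $v_4$) are already present, so $v_5$'s ambient degree is $4$, the savings factor is $1$, and with two off-face neighbors each up to $48$ plus $v_1,v_4$, you get a Key Lemma bound well above $48$. You notice this yourself and then hope that the no-intersecting-$4$-cycle hypothesis and Lemmas~\ref{3.1}--\ref{3.3} will rescue the situation, but they are not what is needed here; the paper's proof of this lemma uses none of them.

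The correct move is to re-add $v_5$ \emph{early}, not late. The paper deletes $S=\{v_1,v_2,v_3,v_4,v_5,u,w\}$ (so $v_4$ comes out as well) and re-adds in the order: $v_2$ first (ambient degree $1$, bound $\lceil 48/4\rceil+1=13$); then $v_5$ and $w$ (each at ambient degree $2$, seeing only their two off-face $G'$-neighbors, bound $\lceil 96/3\rceil+1=33$); then $v_1$ and $u$ (each at ambient degree $3$, $t\le 13+33+48$, bound $\lceil 94/2\rceil+1=48$); then $v_4$ (ambient degree $2$, $t\le 33+48$, bound $\lceil 81/3\rceil+1=28$); finally $v_3$ (ambient degree $3$, $t\le 13+28+48$, bound $\lceil 89/2\rceil+1=46$). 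Every bound is $\le 48$, contradicting minimality. The point you missed is that inserting the $4$-vertex $v_5$ right after $v_2$, while $v_1$ and $v_4$ are still deleted, holds its ambient degree at $2$ and buys a savings factor of $3$. Once that is done the argument is a purely numerical application of the Key Lemma with no appeal to the structural lemmas or the intersecting-$4$-cycle condition.
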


\begin{proof}
Suppose otherwise, by symmetry assume that $v_2$ is adjacent to a $3$-vertex $u$ with a $3$-neighbor $w$ such that $u,w\notin V(f)$.
Let $G'$:=$G-\{v_1,v_2,v_3,v_4,v_5,x,y\}$. By minimality, $G'$ has a $48$-good L-recoloring sequence $\sigma'$ that transforms $\alpha_{\upharpoonright G'}$ to $\beta_{\upharpoonright G'}$. By the Key lemma, we can recolor $v_2$ at most $\lceil\frac{48}{4}\rceil+1=13$ times and $v_5$ and $y$ at most $\lceil\frac{2\times48}{3}\rceil+1=33$ times. Then $v_1$ and $x$ can be recolored at most $\lceil\frac{33+48+13}{3}\rceil+1=48$ times by again using the Key lemma.  Then we can recolor $v_4$ at most $\lceil\frac{33+48}{3}\rceil+1=28$ times and $v_3$ at most $\lceil\frac{28+48+13}{2}\rceil+1=46$ times by the Key lemma. Therefore, we get a $48$-good L-recoloring sequence $\sigma$ that transforms $\alpha_{\upharpoonright G}$ to $\beta_{\upharpoonright G}$ of $G$, a contradiction.
\end{proof}

\begin{lemma}\label{3.5}
Let $f=[v_1v_2v_3v_4v_5]$ be a $(3,4,3,3,4)$-face, then $f$ cannot be adjacent to a $(3,4,3,4)$-face.
\end{lemma}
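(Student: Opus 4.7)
The plan is to follow the template of Lemma~\ref{3.4}. Suppose for contradiction that some $(3,4,3,4)$-face $g$ of $G$ is adjacent to $f$. Set $G':=G-(V(f)\cup V(g))$. By minimality of $G$, there is a $48$-good $L$-recoloring sequence $\sigma'$ transforming $\alpha_{\upharpoonright G'}$ into $\beta_{\upharpoonright G'}$. I then add the vertices of $V(f)\cup V(g)$ back one at a time, applying the Key Lemma at each step to bound the number of recolorings of every reinserted vertex by $48$.

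The first task is to pin down the possible shared edges. Every edge of $g$ has endpoint degrees $(3,4)$, so the shared edge of $f$ must also have this property, ruling out $v_3v_4$. Moreover, two distinct faces of a plane $2$-connected graph cannot share two consecutive edges (at the common middle vertex, both faces would be forced to occupy the same cyclic wedge), while any pair of non-adjacent edges of $f$ in the list $\{v_1v_2,\,v_2v_3,\,v_4v_5,\,v_5v_1\}$ would force a chord of $f$ creating a $3$-cycle. Hence $f$ and $g$ share exactly one edge. Using the reflection symmetry of $f$ that fixes $v_1$ and swaps $\{v_2,v_5\}$ with $\{v_3,v_4\}$, this leaves two cases: Case~1, where the shared edge is $v_1v_2$ and $g=[v_1v_2u_3u_4]$ with $d(u_3)=3,\,d(u_4)=4$; and Case~2, where the shared edge is $v_2v_3$ and $g=[u_1v_2v_3u_4]$ with $d(u_1)=3,\,d(u_4)=4$. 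In both cases $|V(f)\cup V(g)|=7$ and every vertex in this set has degree $3$ or $4$.

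In each case I add the seven deleted vertices back in a carefully chosen order. The strategy is: first process the $4$-vertices that have two neighbors outside $V(f)\cup V(g)$ (budget $\lceil 2\cdot 48/(6-2-1)\rceil+1=33$); then process $v_2$, whose only currently-present neighbor is its unique outside neighbor (budget $\lceil 48/(6-1-1)\rceil+1=13$); then process the $3$-vertices, saving for last the unique $3$-vertex whose three neighbors all lie in $V(f)\cup V(g)$. For Case~2 the order $v_5,\,u_4,\,v_2,\,v_1,\,u_1,\,v_4,\,v_3$ gives successive Key-Lemma bounds $33,\,33,\,13,\,48,\,48,\,28,\,38$; for Case~1 the order $v_5,\,u_4,\,v_2,\,u_3,\,v_4,\,v_3,\,v_1$ gives $33,\,33,\,13,\,48,\,28,\,46,\,41$. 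All bounds are at most $48$, producing a $48$-good $L$-recoloring sequence of $G$ and contradicting the minimality of $G$.

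The principal obstacle is the choice of add-back order: naively placing a $3$-vertex at a step where all three of its neighbors are already present with budget $48$ would yield $\lceil 3\cdot 48/(6-3-1)\rceil+1=73$, far above the target. The key observation driving the orderings above is that $v_2$, having three of its four neighbors inside $V(f)\cup V(g)$, attains the much smaller budget $13$ when inserted early, and this small budget then cascades into the bound for the final $3$-vertex ($v_1$ in Case~1 or $v_3$ in Case~2), each of which is adjacent to $v_2$. Verifying that the proposed orderings really give bounds at most $48$ — together with the auxiliary checks that the $3$- and $4$-cycle hypotheses prevent stray coincidences between the listed outside neighbors and the other vertices of $V(f)\cup V(g)$ — is then a routine bookkeeping exercise.
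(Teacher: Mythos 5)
Your proposal is correct and follows essentially the same approach as the paper: delete $V(f)\cup V(g)$, apply minimality, and reinsert the seven vertices one at a time via the Key Lemma, starting with the two $4$-vertices that keep two outside neighbors (budget $33$), then $v_2$ (budget $13$), then cascading these small budgets into the remaining $3$-vertices. The paper handles both shared-edge cases with a single uniform reinsertion order ($v_2,v_5,w$; then $u,v_1$; then $v_4$; then $v_3$) and quotes the worst bound across cases at each step, whereas you split the two cases and tailor the order to each — this makes the arithmetic slightly cleaner but yields the same bound of $48$.
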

\begin{proof}
Suppose otherwise that $f$ is adjacent to a $(3,4,3,4)$-face. By symmetry either $f_1=[v_1v_2uw]$ or $f_2=[v_3v_2uw]$ is a $(3,4,3,4)$-face.  Let $G'$:=$G-\{v_1,v_2,v_3,v_4,v_5,u,w\}$. By minimality, $G'$ has a $48$-good $L$-recoloring sequence $\sigma'$ that transforms $\alpha_{\upharpoonright G'}$ to $\beta_{\upharpoonright G'}$. By the Key lemma, $v_2$ can be recolored at most $\lceil\frac{48}{4}\rceil+1=13$ times and $v_5$ and $w$ can be recolored at most $\lceil\frac{2\times48}{3}\rceil+1=33$ times. Next we recolors each of $u,v_1$ at most $\lceil\frac{13+48+33}{2}\rceil+1=48$ times by the Key lemma. Now we recolor $v_4$ at most $\lceil\frac{33+48}{3}\rceil+1=28$ times by the Key lemma. Then we can extend $\sigma'$ to a $48$-good $L$-recoloring sequence $\sigma$ of $G$ that transforms $\alpha_{\upharpoonright G}$ to $\beta_{\upharpoonright G}$ by recoloring $v_3$ at most $\lceil\frac{28+13+48}{2}\rceil+1=46$ times.
\end{proof}

We apply a discharging argument on $G$ to finish the proof of Theorem~\ref{main1}.
For $v\in V(G)$,$f\in F(G)$, let $\mu(v)=2d(v)-6$,$\mu(f)=d(f)-6$ be the initial charge. By Euler's formula,
$\sum_{v\in V(G)}(2d(v)-6)+\sum_{f\in F(G)}(d(f)-6)=-12$.
We will redistribute the charges of $G$ according to a set of rules, to make every $x\in V(G)\cup F(G)$ satisfies $\mu^*(x)\ge 0$. Se we have $-12=\sum_{x\in V(G)\cup F(G)}\mu(x)=\sum_{x\in V(G)\cup F(G)}\mu^*(x)\ge 0$, which is a contradiction.

For $x,y\in V(G)\cup F(G)$, let $\tau(x\to y)$ be the charge that $x$ gives $y$ according to the following rules. A $5$-face $f=[v_1v_2v_3v_4v_5]$ is called special if it is a $(3,4,3,3,4)$-face such that $v_1v_2$ is on a $(3,4,4,3)$-face. We call $v_2$ a poor $4$-vertex to $f$ and $v_5$ a rich $4$-vertex to $f$. The discharging rules are as follows:
\begin{itemize}
\item[(R1)] Each $5^+$-vertex gives $\frac{4}{3}$ to each incident $4$-face and $\frac{2}{3}$ to incident $5$-face.

\item[(R2)] Each special $5$-face gets $\frac{2}{3}$ from each incident rich $4$-vertex and $\frac{1}{3}$ from each incident  poor $4$-vertex.

\item[(R3)] After (R1)(R2)  each $4$- or $5$-face with negative charge gets needed charge evenly from its incident $4$-vertices.
\end{itemize}

\begin{lemma}\label{charge}
Let $v$ be a $4$-vertex in $G$. Then $v$ gives $1$ to each incident $(3,4,3,4)$-face or $(3,4,3,4)$-face, $\frac{2}{3}$ to each incident $(3,4,4,4)$-face, $(3,4,3,5^+)$-face, $(3,3,4,5^+)$-face or special $5$-face $f$ when $v$ is rich to $f$, and at most $\frac{1}{2}$ to each of other incident faces.
\end{lemma}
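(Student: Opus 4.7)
The plan is to perform a case analysis on the degree-type of each face incident to $v$. Since $v$ has degree $4$, rule (R1) sends no charge out of $v$, and rule (R2) activates at $v$ only when $v$ sits on a special $5$-face; every remaining outflow occurs via (R3), which splits the residual deficit of an incident $4$- or $5$-face evenly among its incident $4$-vertices. Thus the task reduces to computing, for each admissible face-type at $v$, the post-(R1)(R2) charge of $f$ and dividing the deficit by the number of $4$-vertices on $f$.

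I would first dispatch the incident $4$-faces. The initial charge $\mu(f)=-2$ is increased by $\tfrac{4}{3}$ per incident $5^+$-vertex. Using Lemmas~\ref{3.1}--\ref{3.3} to exclude $2^-$-vertices, $3$-vertices with two $3$-neighbors, and $4$-vertices with four $3$-neighbors, the admissible degree sequences around $f$ are few, and direct computation sorts them into the three claimed buckets: $(3,4,3,4)$ has deficit $-2$ shared by two $4$-vertices, yielding $1$ each; $(3,4,4,4)$, $(3,4,3,5^+)$ and $(3,3,4,5^+)$ each leave a deficit absorbed by their incident $4$-vertices at rate $\tfrac{2}{3}$; and every remaining type — $(4,4,4,4)$, $(3,4,4,5^+)$, $(4,4,4,5^+)$, or any $4$-face with at least two $5^+$-vertices — contributes at most $\tfrac{1}{2}$ from $v$.

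Next I would treat the incident $5$-faces. The initial charge $\mu(f)=-1$ is raised by $\tfrac{2}{3}$ per incident $5^+$-vertex. When $f$ is a special $5$-face, (R2) fixes the contribution of $v$ at $\tfrac{2}{3}$ (rich) or $\tfrac{1}{3}$ (poor). Otherwise I would apply (R3) to the residual, using Lemma~\ref{3.2} to forbid three consecutive $3$-vertices on $f$: a short computation shows the non-special $(3,4,3,3,4)$-face contributes exactly $\tfrac{1}{2}$ per $4$-vertex, while every remaining non-special type — for instance $(3,4,4,4,4)$, $(3,4,3,4,5^+)$, or any type with two or more $5^+$-vertices — contributes strictly less.

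The main obstacle is bookkeeping: ensuring that the enumeration of admissible face-types is complete under the given structural lemmas, and that no excluded degree sequence can actually occur in $G$. Once the enumeration is verified, the claimed bounds reduce to elementary arithmetic on rules (R1)--(R3).
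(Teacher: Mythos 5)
Your proposal is correct and takes essentially the same route as the paper: a case analysis on the degree-type of each incident $4$- or $5$-face, computing the post-(R1)(R2) deficit and splitting it evenly over the incident $4$-vertices via (R3), with Lemmas~\ref{3.1}--\ref{3.3} pruning the admissible degree sequences. One small remark: the lemma's ``$(3,4,3,4)$-face or $(3,4,3,4)$-face'' is evidently a typo for ``$(3,4,3,4)$-face or $(3,4,4,3)$-face'' (cf.\ the definition of a special $5$-face and Case~2 of the $4$-vertex analysis), so your enumeration of $4$-face types should explicitly place the $(3,3,4,4)=(3,4,4,3)$ arrangement in the ``gives $1$'' bucket rather than the ``at most $\frac{1}{2}$'' bucket; the computation (deficit $-2$ shared by two $4$-vertices) is identical to the $(3,4,3,4)$ case you already treat.
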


\begin{proof}
Obviously $v$ gives $1$ to each incident $(3,4,3,4)$-face or $(3,4,3,4)$-face, $\frac{2}{3}$ to each incident $(3,4,4,4)$-face, $(3,4,3,5^+)$-face or special $5$-face $f$ when $v$ is rich to $f$. Let $f$ be an incident face of $v$ which is not the faces mentioned above. Since $G$ contains no $3$-cycles, $d(f)\ge4$. If $d(f)=4$, then $f$ must be a $(3,4,4^+,5^+)$-face or  $(4,4^+,4^+,4^+)$-face. By (R1)(R3) $v$ gives at most $\frac{1}{3}<\frac{1}{2}$ to $f$ in the former case and at most $\frac{1}{2}$ to $f$ in the latter case.  If $d(f)=5$, then $f$ is incident to at least two $4^+$-vertices by Lemma~\ref{3.2}. If $f$ is incident to a $5^+$-vertex, then by (R1) $f$ gets $\frac{2}{3}$ from the $5^+$-vertex. So $v$ gives at most $\frac{1}{3}<\frac{1}{2}$ to $f$.  If $f$ is incident to no $5^+$-vertices, then $f$ must be incident to at least three $4$-vertices or $f$ is a $(3,4,3,3,4)$-face.In the former case, $v$ gives at most $\frac{1}{3}<\frac{1}{2}$ to $f$ by(R1) (R3). In the latter case, if $f$ is special, then $v$ is poor to $f$. So by (R2) $v$ gives $\frac{1}{3}<\frac{1}{2}$ to $f$.  If $f$ is not special, then by (R3) $v$ gives $\frac{1}{2}$ to $f$. If $d(f)\ge6$, then $v$ gives nothing to $f$ since $6^+$-faces are involved in the discharging procedure.
 \end{proof}

We shall show that each $x\in V(G)\cup F(G)$ has final charge $\mu^*(x)\ge 0$. Let $f$ be a $k$-face in $G$. Since $G$ contains no $3$-cycles, $k\ge4$. If $k\in\{4,5\}$, then $\mu^*(f)\ge0$ by (R3). If $k\ge6$, then $\mu^*(f)\ge d(f)-6\ge0$ since $6^+$-faces are not involved in the discharging procedure.

Let $v$ be a $k$-vertex in $G$. By Lemma~\ref{3.1} $k\ge3$. If $k=3$, then $\mu^*(v)=\mu(v)=2\times3-6=0$ since $3$-vertices are not involved in the discharging procedure. If $k\ge5$, then $v$ gives $\frac{4}{3}$ to at most one $4$-face and gives at most $\frac{2}{3}$ to each of other incident faces by R(1). So we have $\mu^*(v)\geq 2k-6-\frac{4}{3}- \frac{2}{3}(k-1) \geq 0$. Now we are left to consider the case that $k=4$. Let $N_G(v)=\{v_1,v_2,v_3,v_4\}$ in clockwise order and let $f_i$ be the incident  face of $v$ containing $v_i,v,v_{i+1}$ for each $i\in[4]$ (index module $4$).  By Lemma~\ref{charge} $v$ gives at most $\frac{1}{2}$ to each incident face which implies that $\mu^*(v)\ge2\times4-6-\frac{1}{2}\times4=0$ unless one of the following four cases holds.

{\bf Case 1:}\  $v$ is incident to a $(3,4,3,4^+)$-face, say $f_1$. If $f_1$ is a $(3,4,3,4)$-face, then by (R3) $\tau(v\to f_1)=1$. By Lemma~\ref{3.5}, $f_2$ or $f_4$ cannot be a $(3,4,3,3,4)$-face.  By Lemma~\ref{3.3} $f_3$ can not be a $(3,4,3,3,4)$-face. So we have $\tau(v\to f_i)\leq \frac{1}{3}$ for $i\in\{2,3,4\}$. Thus, $\mu^*(v)\geq 2\times4-6-1-\frac{1}{3}\times3=0$. If $f_1$ is a $(3,4,3,5^+)$-face, then  by (R1)(R3) $\tau(v\to f_1)=\frac{2}{3}$. By Lemma~\ref{3.3} at least one of $\{v_3,v_4\}$ is a $4^+$-vertex, by symmetry say $v_3$, then $\tau(v\to f_2)+\tau(v\to f_3)\leq \frac{1}{3}+\frac{1}{3}$. By (R3) $\tau(v\to f_4)\le\frac{2}{3}$ since $f_4$ is a $5^+$-face. So  $\mu^*(v)\geq 2-\frac{2}{3}\times2-\frac{1}{3}\times2=0$.

 {\bf Case 2:}\  $v$ is incident to a $(3,4,4^+,3)$-face, say $f_1$.
 Let $f_1$ be a $(3,4,4^+,3)$-face with $d(v_1)=3$. If $f_1$ is a $(3,4,4,3)$-face, then by (R3) $\tau(v\to f_1)=1$. If $f_4$ is a $(3,4,3,3,4)$-face, then $v$ is a poor $4$-vertex to $f_4$. so $v$ gives $\frac{1}{3}$ to $f_4$ by (R2). In other cases, $v$ gives at most $\frac{1}{3}$ to $f_4$. By Lemma~\ref{3.4}, $f_3$ cannot be a $(3,4,3,3,4)$-face. Since $d(v)=d(v_2)=4$, $f_2$ also cannot be a $(3,4,3,3,4)$-face. So $\tau(v\to f_2)+\tau(v\to f_3)\leq \frac{1}{3}+\frac{1}{3}=\frac{2}{3}$ by R(1)R(3). The final charge $\mu^*(v)\geq 2\times4-6-1-\frac{1}{3}\times3\geq0$. If $f_1$ is a $(3,4,5^+,3)$-face, then $v$ gives $\frac{2}{3}$ to $f_1$, at most $\frac{1}{3}$ to $f_2$ and at most $\frac{2}{3}$ to $f_3$. So $\mu^*(v)\geq 2-\frac{2}{3}\times2-\frac{1}{3}\times2=0$.

{\bf Case 3:}\  $v$ is incident to a $(3,4,4,4)$-face, say $f_1$.
 Let $f_1$ be a $(3,4,4,4)$-face with $d(v_1)=3$, so by (R3)  $\tau(v\to f_1)=\frac{2}{3}$. Since $v,v_2$ are $4$-vertices, then $f_2$ is incident to at least three $4^+$-vertices yb Lemma~\ref{3.2}. So $\tau(v\to f_2) \leq \frac{1}{3}$ by (R1)(R3). Since $v$ is not a rich vertex on a special $5$-face, $v$ gives at most $\frac{1}{2}$ to each of $f_3$ and $f_4$. So $\mu^*(v)\geq 2-\frac{2}{3}-\frac{1}{3}-\frac{1}{2}\times2=0$.

{\bf Case 4:} $v$ is incident to a special $5$-face, say $f_1=[v_1vv_2uw]$ and $v$ is rich to $f_1$, $w$ is a poor $4$-vertex by symmetry. Then $\tau(v\to f_1)= \frac{2}{3}$ by (R2) and $f_4$ must be a $5^+$-face since $G$ contains no intersecting $4$-faces.  By Lemma~\ref{3.4} each of $f_2$ and $f_4$ cannot be a $(3,4,3,3,4)$-face. So $\tau(v\to f_4)\leq \frac{1}{3}$ by (R1)(R3).

 {\bf Case 4.1:}\  $f_2, f_3$ are $5^+$-faces.
By Lemma~\ref{3.3}, at least one of $v_3,v_4$ is a $4^+$-vertex. Let $v_i$ be a $4^+$-vertex with $i\in\{3,4\}$. Then each of $f_{i-1}$ and $f_i$ is incident to at least three $4^+$-vertices by Lemma~\ref{3.2}. So $\tau(v\to f_i)= \frac{1}{3}$ and $v$ gives each other incident face at most $\frac{2}{3}$. So  $\mu^*(v)\geq2\times4-6-\frac{1}{3}\times2-\frac{2}{3}\times2=0$.

{\bf Case 4.2:}\  one of $f_2, f_3$ is a $4$-face.
If $f_2$ is a $4$-face, then let $f_2=[vv_3xv_2]$. Since $d(u)=d(v_2)=3$, $d(x)\geq4$ by Lemma~\ref{3.2}.  By Lemma~\ref{3.3} $v_3$ or $v_4$ is a $4^+$-vertex. So by Lemma~\ref{3.2} $f_3$ is a $5^+$-face with at least three $4^+$-vertices,  so $\tau(v\to f_3)\leq \frac{1}{3}$ by (R1)(R3). If $d(v_3)\geq4$, then $f_2$ is a $(3,4,4^+,4^+)$-face, so $\tau(v\to f_2)\leq \frac{2}{3}$ by (R1)(R3). If $d(v_3)=3$, then $x$ is a $5^+$-vertex by Lemma~\ref{3.5}. So we have $\tau(v\to f_2)\leq \frac{2}{3}$ by R(1)R(3). Thus, $\mu^*(v)\geq2\times4-6-\frac{2}{3}\times2-\frac{1}{3}\times2=0$. If $f_3$ is a $4$-face, then let $f_3=[vv_3yv_4]$. By Lemma~\ref{3.3} and \ref{3.4} at least two of $\{v_3,y,v_4\}$ are $4^+$-vertices. So $\tau(v\to f_3)\leq \frac{2}{3}$ by (R1)(R3). Recall that $f_2$ cannot be a $(3,4,3,3,4)$-face. So $\tau(v\to f_2)\leq \frac{1}{3}$ by (R1)(R3). So $\mu^*(v)\geq2\times4-6-\frac{2}{3}\times2-\frac{1}{3}\times2=0$.

\section{Proof of Theorem~\ref{main2} }

In this section, we give the proof of Theorem~\ref{main2}.
We first introduce some notations used in this part. A {\em thread} in $G$ is a path with all internal vertices of degree $2$. A {\em $k$-thread} is a thread with $k$ internal vertices. A $3_{a,b,c}$-vertex is a $3$-vertex that is the endpoint of a maximal $a$-thread, a maximal $b$-thread, a maximal $c$-thread, all distinct. A $4_{a,b,c,d}$-vertex is defined similarly. A weak neighbor of a $3^+$-vertex $v$ is a $3^+$-vertex $u$ such that $v$ and $u$ are endpoints of a common 1-thread. In this case, we call $v$ is weak adjacent to $u$ and vice versa. A $2$-vertex $v$ is nearby a $3^+$-vertex $w$ if $v$ is an interior vertex of a thread with $w$ as one endpoint. Call a $3_{1,1,0}$-vertex $v$ bad if $v$ is weak adjacent to a $3_{1,1,1}$-vertex.  We assume that $G$ is a minimum counterexample of Theorem~\ref{main1}. We will use discharging method to show that $G$ has average degree at least $\frac{5}{2}$, contradicting the hypothesis. The following Lemma~\ref{minimumdegree} to ~\ref{3111} are from \cite{C22}. Note that the value of $k$ from Lemma~\ref{2-thread} and ~\ref{3-thread} is $14$ in~\cite{C22}, actually the proof works equally well for all integer $k\ge3$.

\begin{lemma}\label{minimumdegree} (Lemma 2 in ~\cite{C22})
The graph $G$ is connected and $\delta(G)\ge 2$.
\end{lemma}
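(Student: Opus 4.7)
The plan is to exploit the minimality of $G$ together with the Key Lemma, treating the two assertions separately. The key background observation is that $\mathrm{mad}$ is inherited by subgraphs: any proper subgraph $H$ of $G$ satisfies $\mathrm{mad}(H) \le \mathrm{mad}(G) < 5/2$ with the restricted $4$-assignment, and hence admits an $18$-good $L$-recoloring sequence by the minimality of $G$.

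First I would prove connectedness. If $G$ decomposed as $G_1 \cup \cdots \cup G_k$ with $k \ge 2$, then each $G_i$ is a proper subgraph satisfying the hypotheses of Theorem~\ref{main2}, and minimality supplies an $18$-good $L$-recoloring sequence $\sigma_i$ transforming $\alpha_{\upharpoonright G_i}$ to $\beta_{\upharpoonright G_i}$. Concatenating $\sigma_1,\ldots,\sigma_k$ in any order yields an $18$-good $L$-recoloring sequence for $G$ from $\alpha$ to $\beta$, since recoloring steps inside one component cannot introduce conflicts across components. This contradicts the assumption that $G$ is a counterexample.

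Next I would show $\delta(G) \ge 2$. Suppose toward a contradiction that some $v \in V(G)$ has $d_G(v) \le 1$, and set $G' := G - v$. By minimality there is an $18$-good $L$-recoloring sequence $\sigma'$ transforming $\alpha_{\upharpoonright G'}$ to $\beta_{\upharpoonright G'}$. If $d_G(v) = 0$, I extend $\sigma'$ by a single step switching $v$ from $\alpha(v)$ to $\beta(v)$, which is trivially proper. If $d_G(v) = 1$, then $\sigma'$ recolors $N_G(v)$ at most $18$ times, and since $|L(v)| = 4 \ge d_G(v)+2$, the Key Lemma extends $\sigma'$ to an $L$-recoloring sequence of $G$ in which $v$ is recolored at most $\lceil 18/(4-1-1)\rceil + 1 = 10 \le 18$ times. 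Either way the resulting sequence is $18$-good, contradicting the minimality of $G$.

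There is no substantive obstacle: both arguments are direct applications of minimality and the Key Lemma, and the slack between $10$ and $18$ in the $d_G(v)=1$ case confirms that the constant $18$ easily accommodates terminal vertices. The only mild subtlety is remembering that the $d_G(v)=0$ case can be closed by an ad hoc final step rather than invoking the Key Lemma.
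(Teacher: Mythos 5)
The paper does not reprove this statement; it is imported verbatim as Lemma~2 of Cranston's paper~\cite{C22}, so there is no in-paper proof to compare against. Your self-contained argument is correct and is the natural one: concatenating component-wise sequences handles connectedness, and deleting a $1^-$-vertex followed by the Key Lemma handles the degree bound. One small remark: the ``ad hoc final step'' for $d_G(v)=0$ is not actually needed, since the Key Lemma applies with $N_G(v)=\emptyset$ and $t=0$, giving $\lceil 0/(4-0-1)\rceil+1=1$ recoloring of $v$; both subcases of $\delta(G)\le 1$ are thus covered by a single invocation of the Key Lemma, which is presumably how Cranston phrases it in~\cite{C22}.
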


\begin{lemma}\label{2-thread} (Lemma 3 in ~\cite{C22})
Let $v_1v_2v_3v_4$ be a $2$-thread in some subgraph $H$ of $G$. Let $H'=H-\{v_2,v_3\}$. For $k\ge3$, let ${\sigma}'$ be a $k$-good L-coloring sequence for $H'$ that transforms $\alpha|H'$ to $\beta|H'$. If ${\sigma'}$ recolors $v_4$ at most $s$ times, and $s\le k-3$, then $H$ has a $k$-good L-recoloring sequence that recolors $v_3$ at most $s+3$ times and transforms $\alpha$ to $\beta$.
\end{lemma}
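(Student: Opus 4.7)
My plan is to extend $\sigma'$ to a sequence $\sigma$ for $H$ by processing $\sigma'$ one step at a time, inserting recolorings of $v_2$ and $v_3$ as needed to maintain a proper $L$-coloring of $H$ throughout. A recoloring of a vertex $u \notin \{v_1, v_4\}$ in $\sigma'$ is copied into $\sigma$ unchanged. When $v_1$ moves in $\sigma'$ from its current color $c_1$ to $c_1'$, I first test whether $c_1'$ coincides with the current color $c_2$ of $v_2$; if so, I insert a recoloring of $v_2$ to some color in $L(v_2) \setminus \{c_1, c_1', c_3\}$, where $c_3$ is the current color of $v_3$. Such a color exists since $|L(v_2)| = 4$. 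The symmetric treatment is applied whenever a move of $v_4$ conflicts with the current color of $v_3$. Whenever I choose a new color for one of $v_2, v_3$, I pick it to avoid the current color of the other, so a recoloring of $v_2$ never cascades into a recoloring of $v_3$ and vice versa. After $\sigma'$ is exhausted we have $v_1 = \beta(v_1)$ and $v_4 = \beta(v_4)$; a short case analysis shows that bringing $v_2, v_3$ to $\beta(v_2), \beta(v_3)$ costs at most three further recolorings, the worst case being a ``swap'' in which $v_3$ must pass through an intermediate color.

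Counting, the vertices of $V(H')$ are recolored exactly as in $\sigma'$, hence at most $k$ times each. The vertex $v_3$ is recolored at most $s$ times during the simulation (once per conflicting move of $v_4$) plus at most $2$ in the final adjustment, so $v_3 \le s + 2$. The vertex $v_2$ is recolored at most $k$ times during the simulation plus at most $1$ in the final adjustment, giving the naive bound $v_2 \le k + 1$. The bound on $v_3$ already meets the target $s + 3$, but the bound on $v_2$ overshoots $k$ by one.

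To save one $v_2$ move I would employ a single Key-Lemma-style look-ahead, at the cost of one extra $v_3$ move. At a well-chosen conflicting $v_1$ move (from $c_1$ to $c_1'$, where the current $v_2$ equals $c_1'$), I first preemptively recolor $v_3$ to $c_1$, which aligns $v_3$'s current color with $v_1$'s old color. Then the forbidden set for the forced $v_2$ move collapses from $\{c_1, c_1', c_3\}$ (three distinct colors, one valid choice) to $\{c_1, c_1'\}$ (two choices), and I pick the new color of $v_2$ so that it also avoids $v_1$'s \emph{next} color in $\sigma'$. This guarantees that $v_1$'s next move does not retrigger $v_2$, so two consecutive $v_2$ triggers are collapsed into one. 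The resulting counts are $v_2 \le (k-1) + 1 = k$ and $v_3 \le s + 1 + 2 = s + 3$, so $\sigma$ is $k$-good and meets the $v_3$ bound.

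The main obstacle is verifying that the look-ahead can always be performed: the preemptive recoloring of $v_3$ to $c_1$ requires $c_1$ to differ from the current color of $v_4$. Because $\sigma'$ recolors $v_4$ at most $s \le k-3$ times while $v_1$ is recolored up to $k$ times, I expect a pigeonhole argument to produce a trigger at which $v_4$'s color differs from $v_1$'s old color; in the degenerate case where $v_2$ is triggered fewer than $k$ times, the naive bound already yields $v_2 \le k$ and no look-ahead is needed, whereas in the degenerate case where the colors of $v_1$ and $v_4$ remain synchronized, the look-ahead can instead be performed at the end of $\sigma'$ (where $v_4 = \beta(v_4) \ne \beta(v_1)$) or folded into the final adjustment. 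Making this case distinction precise is the most delicate step of the proof.
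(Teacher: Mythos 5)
Your simulation strategy is reasonable, and the final-adjustment analysis and the rough counts for $v_3$ are fine, but the load-bearing step---the ``look-ahead'' that is supposed to save one recoloring of $v_2$ and bring its count from $k+1$ down to $k$---has a genuine gap in the list-coloring setting. You ask to preemptively recolor $v_3$ to $c_1$, the \emph{old} color of $v_1$, so that the forbidden set for $v_2$ collapses from $\{c_1,c_1',c_3\}$ to $\{c_1,c_1'\}$. But $c_1\in L(v_1)$ in no way guarantees $c_1\in L(v_3)$: the lists of $v_1$ and $v_3$ are arbitrary $4$-sets and can be disjoint. Moreover, the collapse you want is not achievable in any other way: since $v_3$'s new color must avoid $v_2$'s current color $c_1'$, the only way the forbidden set $\{c_1,c_1',c_3^{\mathrm{new}}\}$ can have size $2$ is $c_3^{\mathrm{new}}=c_1$. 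So when $c_1\notin L(v_3)$ the look-ahead simply cannot be executed, your bound on $v_2$ stays at $k+1$, and the resulting sequence fails to be $k$-good. This is not a technicality you can route around by picking a different trigger, because the obstruction $c_1^{(i)}\notin L(v_3)$ can hold at every trigger simultaneously.

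You also flag yourself that the pigeonhole argument locating a usable trigger (one with $v_4\ne c_1$) and the attendant case distinction are ``the most delicate step'' and not carried out. That part is in fact fixable (among $k$ triggers $v_4$ changes at most $s\le k-3$ times, so two consecutive triggers see the same $v_4$-color while $v_1$'s old color changes, hence one of them works), but it does not help with the list obstruction above. Note also that this paper does not supply a proof of this lemma---it is cited verbatim as Lemma~3 of Cranston \cite{C22}, whose argument is organized around the Key Lemma rather than a step-by-step simulation with a one-shot look-ahead; so even a repaired version of your argument would be a genuinely different route than the one the paper relies on.
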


\begin{lemma}\label{3-thread}(Lemma 4 in ~\cite{C22})
Let $v_1v_2v_3v_4v_5$ be a $3$-thread in some subgraph $H$ of $G$(not necessarily proper). Let $H'=H-\{v_2,v_3,v_4\}$. For $k\ge3$, let ${\sigma'}$ be a $k$-good $L$-recoloring sequence for $H'$ that transforms $\alpha_{\upharpoonright H'}$ to $\beta_{\upharpoonright H'}$. We can extend ${\sigma'}$ to a $k$-good L-recoloring sequence for $H$ that transforms $\alpha|H$ to $\beta|H$ and recolors $v_3$ at most $4$ times. In particularly, $G$ has no $3$-threads.
\end{lemma}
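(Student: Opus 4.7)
The plan is to extend $\sigma'$ one vertex at a time, using the Key Lemma for $v_2$ and $v_4$ and a tailored parking argument for $v_3$ to obtain the sharp bound of $4$ recolorings on $v_3$.

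First, I would add back $v_2$: in the subgraph $H - \{v_3, v_4\}$, the vertex $v_2$ has $d(v_2) = 1$ (its only neighbor is $v_1$), giving slack $|L(v_2)| - d(v_2) - 1 = 2$. Applying the Key Lemma to the bound of $k$ recolorings on $v_1$ yields $v_2$ recolored at most $\lceil k/2 \rceil + 1 \le k$ times (using $k \ge 3$). Symmetrically, extending by $v_4$ to $H - v_3$ bounds $v_4$ by $\lceil k/2 \rceil + 1 \le k$ recolorings.

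For the final step, adding $v_3$ to reach $H$, a direct application of the Key Lemma would count $v_3$ recolored at most $(v_2 + v_4) + 1 \le 2\lceil k/2 \rceil + 3$ times, which grows with $k$ and far exceeds $4$. Instead, I would design $v_3$'s color sequence by hand: choose a color $c \in L(v_3) \setminus \{\alpha(v_3), \beta(v_3)\}$ (available since $|L(v_3)| = 4$) and park $v_3$ at $c$ throughout the entire sequence, except at the boundaries where we transition $v_3$ from $\alpha(v_3)$ to $c$ at the start and from $c$ to $\beta(v_3)$ at the end. During the parking phase, the only constraint on $v_3$ is $v_3 \ne v_2, v_4$, which is automatically satisfied because we maintain $v_2, v_4 \ne c$ throughout. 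This gives $v_3$ recolored at most $2$ times, comfortably within the bound of $4$, with some slack to accommodate boundary adjustments.

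The main obstacle is ensuring that the parking scheme does not inflate the recoloring counts of $v_2, v_4$ beyond $k$. When a step of $\sigma'$ recolors $v_1$ (resp.\ $v_5$) to a color currently held by $v_2$ (resp.\ $v_4$), we first recolor $v_2$ (resp.\ $v_4$) to some color in $L(v_2) \setminus \{v_1^{\text{new}}, c\}$ (which has at least $2$ choices), so the number of forced recolorings of $v_2, v_4$ stays bounded by the recolorings of $v_1, v_5$ in $\sigma'$. Boundary issues (for instance $\alpha(v_2) = c$ or $\beta(v_4) = c$) are handled either by choosing $c$ to avoid these conflicts when possible, or by merging the adjustment with the first forced recoloring so no recoloring budget is wasted. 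Finally, the conclusion that $G$ has no $3$-thread follows by the usual minimality argument: if such a thread existed in $G$, then taking $H = G$ and applying the lemma with $k = 18$ to the $18$-good sequence on $G - \{v_2, v_3, v_4\}$ (given by minimality of $G$) would yield an $18$-good sequence on $G$, contradicting $G$'s status as a counterexample to Theorem~\ref{main2}.
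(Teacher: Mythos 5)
This lemma is cited in the paper directly from Cranston~\cite{C22} (Lemma 4 there); the paper does not reproduce the proof, so there is no in-paper argument to compare against. Evaluating your proposal on its own merits, the parking idea for $v_3$ is the right instinct, but the bookkeeping for $v_2$ and $v_4$ has a genuine gap.

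The trouble is that imposing the constraint $v_2 \ne c$ throughout (so that $v_3$ can sit at $c$) cannot be grafted onto the Key Lemma bound from your first step without cost. In $H - \{v_3, v_4\}$ the vertex $v_2$ has degree $1$ and list size $4$, so the slack $|L(v_2)| - d(v_2) - 1 = 2$ gives the bound $\lceil k/2\rceil + 1 \le k$. But once $v_2$ must also avoid $c$, the effective list is $L(v_2)\setminus\{c\}$, which has only $3$ colors when $c \in L(v_2)$; the slack drops to $1$, and the Key Lemma bound becomes $\lceil k/1 \rceil + 1 = k+1 > k$, destroying $k$-goodness. Your alternative count, ``one forced recoloring of $v_2$ per recoloring of $v_1$, hence at most $k$,'' bounds only the forced moves and leaves no budget for the boundary moves: moving $v_2$ off $c$ before $v_3$ first enters $c$ (needed whenever $\alpha(v_2) = c$), and moving $v_2$ to $\beta(v_2)$ at the end. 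You cannot always dodge these by choosing $c$: requiring $c \in L(v_3)\setminus\{\alpha(v_3),\beta(v_3),\alpha(v_2),\alpha(v_4),\beta(v_2),\beta(v_4)\}$ may leave nothing since $|L(v_3)|=4$, and requiring $c\notin L(v_2)\cup L(v_4)$ may also fail since that union can contain all four colors of $L(v_3)$. The claimed ``merging'' of a boundary move with the first forced move is asserted but not justified; it fails, for instance, when $\alpha(v_2)=c$ yet $v_1$ is not recolored to a conflicting color until late in $\sigma'$, so the initial move off $c$ must occur separately and in advance. In short, the scheme as written does not guarantee $v_2, v_4 \le k$ recolorings, which is required for the extended sequence to be $k$-good; this is exactly the delicate interaction that Cranston's proof must (and does) handle by planning the moves of $v_2, v_3, v_4$ jointly rather than fixing $v_2$ and $v_4$ by a prior Key Lemma application and then constraining them afterward.
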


\begin{lemma}\label{nearby2}(Lemma 6 in ~\cite{C22})
$G$ does not contain any $3$-vertex with $4$ or more nearby $2$-vertices, and $G$ does not contain any $4$-vertex with $6$ or more nearby $2$-vertices.
\end{lemma}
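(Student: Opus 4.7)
The plan is to assume for contradiction that $G$ contains such a vertex $w$---either a $3$-vertex with at least $4$ nearby $2$-vertices or a $4$-vertex with at least $6$---and then construct an $18$-good $L$-recoloring sequence on $G$, contradicting minimality. By Lemma~\ref{3-thread} every maximal thread at $w$ has at most $2$ interior $2$-vertices, so the multiset of thread lengths at $w$ falls into a short list of shapes: in the $3$-vertex case $\{0,2,2\}$, $\{1,1,2\}$, $\{1,2,2\}$, or $\{2,2,2\}$, with analogous four-entry multisets in the $4$-vertex case. Let $S$ be the set consisting of $w$ together with all nearby $2$-vertices of $w$, and put $G^* := G - S$. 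Since $\operatorname{mad}(G^*) \le \operatorname{mad}(G) < 5/2$ and $|V(G^*)| < |V(G)|$, minimality supplies an $18$-good $L$-recoloring sequence $\sigma^*$ on $G^*$ transforming $\alpha_{\upharpoonright G^*}$ to $\beta_{\upharpoonright G^*}$.

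The core strategy is to extend $\sigma^*$ to an $18$-good sequence on $G$ in two stages. Stage one adds $w$ back (possibly together with one or two adjacent $2$-vertices) and arranges that $w$ is recolored at most $10$ times. Stage two then reinstates each remaining maximal $2$-thread $w x y u'$ at $w$ via Lemma~\ref{2-thread}, taking $v_4 := w$; this applies uniformly because $w$'s recoloring count $s \le 10$ satisfies $s \le k - 3 = 15$, so the hypothesis of Lemma~\ref{2-thread} holds and the lemma outputs an $18$-good extension in which $x$ is recolored at most $s + 3 \le 13$ times.

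Stage one splits into three subcases according to what threads are incident to $w$. If some thread at $w$ is a $0$-thread (a direct edge to a $3^+$-vertex $u$), I add $w$ back first: in $G^* \cup \{w\}$ the degree of $w$ is $1$, so the Key Lemma gives $w$ recolored at most $\lceil 18/2 \rceil + 1 = 10$ times. If no $0$-thread exists but some thread at $w$ is a $1$-thread $w c u$, I first add $c$ (degree $1$ in $G^* \cup \{c\}$, hence recolored at most $10$ times by the Key Lemma) and then $w$ (degree $1$ with only neighbor $c$, hence recolored at most $\lceil 10/2 \rceil + 1 = 6$ times). Otherwise every thread at $w$ is a $2$-thread; in this last subcase I pick two of the $2$-threads $w x y u$ and $w x' y' u'$, delete the vertices of all the other $2$-threads at $w$ to obtain a subgraph $H$ in which $w$ has degree exactly $2$, and apply Lemma~\ref{3-thread} to the $3$-thread $y\,x\,w\,x'\,y'$ in $H$: by minimality there is an $18$-good sequence on $H - \{x, w, x'\}$, and the lemma extends it to an $18$-good sequence on $H$ in which $w$ is recolored at most $4$ times. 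Stage two then completes the extension in each subcase, producing an $18$-good sequence for $G$.

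I expect the main obstacle to be exactly stage one, because $|L(w)| = 4$ lies just below the Key Lemma threshold $d_G(w) + 2 \in \{5, 6\}$, so $w$ cannot be added back after all of its neighbors. The careful choice of order---so that $w$ enters while at most one of its $G$-neighbors is present, or else while $w$ sits as the middle vertex of a ready-made $3$-thread in $H$---is what keeps $w$'s recoloring count at most $10$ (and in fact at most $6$ or $4$ in the two harder subcases). Once that is secured, the condition $s \le k - 3 = 15$ of Lemma~\ref{2-thread} falls out for free, and the remainder is routine bookkeeping over the maximal $2$-threads at $w$.
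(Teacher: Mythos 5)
Your two-stage framework and your use of Lemma~\ref{3-thread} in the all-$2$-thread subcase are the right tools, and subcases (a) and (c) work as you describe. However, subcase (b) has a genuine gap: you add back $c$ and then $w$, and stage two only reinstates $2$-threads, so a \emph{second} $1$-thread at $w$ is never restored. This configuration is forced to arise: among the admissible multisets, $\{1,1,2\}$ (for a $3$-vertex with $\ge 4$ nearby $2$-vertices) and $\{1,1,2,2\}$ (for a $4$-vertex with $\ge 6$) have no $0$-thread and two $1$-threads, so they land in your subcase (b) but leave a $2$-vertex $c'$ dangling. If you try to add $c'$ at the end, it has degree $2$ with neighbors $w$ (recolored at most $6$ times) and the endpoint $u'$ (recolored at most $18$ times), and the Key Lemma only yields $\lceil (6+18)/(4-2-1)\rceil + 1 = 25 > 18$ recolorings. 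Permuting the order in which $c$, $c'$, $w$, and the $2$-thread(s) are restored does not help either: $|L|-d-1 = 1$ for whichever of $c$, $c'$ is added last (or for $w$ if added after both), and the endpoint vertices $u, u', u''$ are stuck at the generic bound $18$, so the divisor never beats the $18$ in the numerator.

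The fix is to treat these cases exactly like your subcase (c). Delete the interior vertices of all the $2$-threads at $w$ to get a subgraph $H$; then $w$ has degree $2$ in $H$ with both neighbors $c$ and $c'$, and $u\,c\,w\,c'\,u'$ is a $3$-thread in $H$. Lemma~\ref{3-thread} (with minimality applied to $H - \{c,w,c'\}$) gives an $18$-good sequence on $H$ recoloring $w$ at most $4$ times; then reinstate each deleted $2$-thread via Lemma~\ref{2-thread} with $v_4 = w$ and $s = 4 \le 15$. Indeed, once you notice this, a cleaner organization replaces your subcases (b) and (c) by a single one: whenever $w$ has no $0$-thread, pick two threads of positive length at $w$ containing \emph{all} the $1$-threads (there are at most two), delete the interiors of the remaining threads to form $H$, apply Lemma~\ref{3-thread} to the resulting $3$-thread through $w$, and then reinstate the remaining $2$-threads. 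One corner case you should at least acknowledge: if $u = u'$ the path $u\,c\,w\,c'\,u'$ degenerates to a $4$-cycle and is not literally a $3$-thread, so you would need to check that the proof of Lemma~\ref{3-thread} tolerates $v_1 = v_5$ or dispatch that configuration separately.
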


\begin{lemma}\label{3210}(Lemma 7 in ~\cite{C22})
No $3_{2,1,0}$-vertex is adjacent to a $3_{1,1,0}$-vertex or a $3_{2,0,0}$-vertex or a $3_{2,1,0}$-vertex.
\end{lemma}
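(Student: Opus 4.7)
Assume for contradiction that $G$, our minimum counterexample, violates the lemma: it contains a $3_{2,1,0}$-vertex $v$ joined through its $0$-thread to a $3$-vertex $u$ of one of the three prescribed types. Label $v$'s threads as $v$--$a_1$--$a_2$--$A$ (the $2$-thread), $v$--$b_1$--$B$ (the $1$-thread), and $v$--$u$ (the $0$-thread), and label $u$'s remaining threads analogously, with exact structure depending on $u$'s type (e.g.\ two $1$-threads $u$--$c_1$--$C$ and $u$--$d_1$--$D$ when $u$ is $3_{1,1,0}$). Let $S$ consist of $v$, $u$, and all the $2$-vertex interiors of these threads, and set $H':=G-S$. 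By the minimality of $G$, $H'$ admits an $18$-good $L$-recoloring sequence $\sigma'$; the plan is to extend $\sigma'$ to an $18$-good sequence on $G$, thereby contradicting the minimality assumption.

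The extension proceeds vertex by vertex, using the Key Lemma and Lemma~\ref{3-thread}. The $2$-vertex interiors are reinserted one at a time, in an order chosen so that at the moment of insertion one of the vertex's two neighbors is still missing; its instantaneous degree is then $1$, and the Key Lemma supplies the bound $\lceil 18/2\rceil+1=10$ on its recoloring count. The $3$-vertices $v$ and $u$ cannot be handled this way, since $|L(v)|=|L(u)|=4=d_G(v)+1=d_G(u)+1$ fails the hypothesis of the Key Lemma. Instead we invoke Lemma~\ref{3-thread}, which bounds the middle interior vertex of a $3$-thread by $4$ recolorings. The key observation is that although $v$ is a $3$-vertex of $G$, in a subgraph that omits its $0$-thread partner $u$ (and $u$'s other thread interiors) $v$ has degree $2$ and can therefore serve as the middle interior vertex of the $3$-thread $B$--$b_1$--$v$--$a_1$--$a_2$. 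Applying Lemma~\ref{3-thread} in such a subgraph reinserts $b_1, v, a_1$ into the recoloring sequence while bounding $v$'s recoloring count by $4$.

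The main obstacle is reinserting $u$, since once $v$ is present in the intermediate subgraph the vertex $u$ has three live neighbors and fails to be a $2$-vertex there. We sidestep this difficulty by applying Lemma~\ref{3-thread} to a $3$-thread that uses both $v$ and $u$ as interior vertices, e.g.\ $a_2$--$a_1$--$v$--$u$--$c_1$ (when $u$ is $3_{1,1,0}$ with $c_1$ as one of its $1$-thread $2$-vertices), inside a carefully chosen subgraph such as $G-\{b_1,d_1\}$, in which both $v$ and $u$ have degree $2$ (their ``outside'' partners $b_1$ and $d_1$ are excluded). In this subgraph the lemma applies and bounds the designated middle interior vertex by $4$; a symmetric choice of $3$-thread (e.g.\ $a_1$--$v$--$u$--$c_1$--$C$, for which the middle interior vertex is $u$) supplies the small recoloring count on whichever of $v, u$ was not already bounded by the first application. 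The last remaining $2$-vertices ($b_1$, $d_1$, and any analogs) are then inserted by further Key Lemma steps whose bounds fit inside $18$ thanks to the small counts on $v$ and $u$. The net result is an $18$-good recoloring sequence on $G$, the desired contradiction. The three cases ($u$ being $3_{1,1,0}$, $3_{2,0,0}$, or $3_{2,1,0}$) are handled by the same template with case-specific choices of $3$-thread and deletion subset.
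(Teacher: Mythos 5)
Your plan to bound both $v$ and $u$ by running Lemma~\ref{3-thread} on a $3$-thread containing both of them has a genuine gap. Lemma~\ref{3-thread} is an \emph{extension} lemma: given a $k$-good sequence on $H'=H-\{v_2,v_3,v_4\}$, it produces a $k$-good sequence on $H$ in which the \emph{middle} vertex $v_3$ is recolored at most $4$ times; the near-endpoint interiors $v_2$ and $v_4$ inherit only the generic bound $k=18$. Once you have applied it to the $3$-thread $a_2$--$a_1$--$v$--$u$--$c_1$ in $H=G-\{b_1,d_1\}$ and thereby reinserted $\{a_1,v,u\}$ with $v$ as the middle, the vertices $a_1,v,u$ are already present in the sequence; you cannot apply the lemma a second time to a ``symmetric'' $3$-thread $a_1$--$v$--$u$--$c_1$--$C$ to bound $u$, because that application would require first deleting and then re-extending vertices that are already inserted. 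Concretely, in the $3_{1,1,0}$ case your single application leaves $u\le 18$, and then inserting $d_1$ (neighbors $u\le 18$ and $D\le 18$, instantaneous degree $2$) via the Key Lemma gives $\lceil 36/1\rceil+1=37$ recolorings for $d_1$, far beyond $18$. The same overflow occurs with $b_1$ if you make $u$ the middle instead.

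The fix is to decouple $v$ and $u$: handle $u$ first in a subgraph from which $v$ (and $v$'s thread interiors and $b_1$) have been deleted, so that $u$ has degree $2$. When $u$ is $3_{1,1,0}$ or $3_{2,1,0}$, its two remaining threads meet at $u$ to form a $3$-thread with $u$ as the middle vertex, and Lemma~\ref{3-thread} yields $u\le 4$; when $u$ is $3_{2,0,0}$, delete $u$'s $2$-thread too so $u$ has degree $1$, and the Key Lemma yields $u\le\lceil 18/2\rceil+1=10$ (then Lemma~\ref{2-thread} reinserts that $2$-thread with $c_1\le 13$). Only after $u$ carries a small bound do you reinsert $b_1$ and $v$, and the correct tool is Lemma~\ref{2-thread}, not Lemma~\ref{3-thread}: applied to the $2$-thread $B$--$b_1$--$v$--$u$ with $v_4=u$ and $s\in\{4,10\}\le 15$, it gives $v\le s+3\le 13$. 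A final application of Lemma~\ref{2-thread} to $A$--$a_2$--$a_1$--$v$ with $s\le 13\le 15$ gives $a_1\le 16$. All counts stay within $18$, yielding the contradiction. So your high-level idea of routing the argument through Lemma~\ref{3-thread} is right, but the deletions and reinsertions must be sequenced so that $u$ is bounded \emph{before} $v$ is reinserted, and Lemma~\ref{2-thread}, which propagates an endpoint bound $s$ to $s+3$ on the adjacent interior vertex, is the tool that lets the small bound on $u$ carry over to $v$.
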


\begin{lemma}\label{3111}(Lemma 8 in ~\cite{C22})
Each $3_{1,1,1}$-vertex has no weak $3$-neighbor that is a $3_{2,1,0}$-vertex and no weak $3$-neighbor that is a $3_{1,1,1}$-vertex.
\end{lemma}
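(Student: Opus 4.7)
The plan is to argue by contradiction: assume the $3_{1,1,1}$-vertex $v$ has a weak $3$-neighbour $u$ of one of the two forbidden types, and construct an $18$-good $L$-recoloring sequence of $G$. In both cases write $a_1,a_2,a_3$ for the $2$-neighbours of $v$, with $a_1$ the interior of the common $1$-thread $v\,a_1\,u$ and $a_2,a_3$ the interiors of $1$-threads ending at $3^+$-vertices $w_2,w_3$. Let $S$ consist of $\{v,u,a_1,a_2,a_3\}$ together with the $2$-vertex interiors of $u$'s remaining threads; in both cases $|S|=7$. By minimality, $H:=G-S$ admits an $18$-good $L$-recoloring sequence $\sigma_0$ transforming $\alpha_{\upharpoonright H}$ to $\beta_{\upharpoonright H}$, which I would extend to $G$ in several stages using Lemmas~\ref{2-thread}, \ref{3-thread} and the Key Lemma.

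In the first case, $u$ is a $3_{2,1,0}$-vertex with $2$-thread $u\,b_1\,b_2\,c$ and $0$-thread $u\,d$, so $S=\{v,u,a_1,a_2,a_3,b_1,b_2\}$. First, in $G-\{a_1,u,b_1,b_2\}$ the vertex $v$ has degree~$2$, so $w_2\,a_2\,v\,a_3\,w_3$ is a $3$-thread with middle $v$; Lemma~\ref{3-thread} extends $\sigma_0$ to a sequence $\sigma_1$ on $G-\{a_1,u,b_1,b_2\}$ in which $v$ is recolored at most $4$ times. Second, I reinsert $u$ via the Key Lemma in $G-\{a_1,b_1,b_2\}$: here $u$ has only the neighbour $d$ present (degree~$1$), so $u$ is recolored at most $\lceil 18/2\rceil+1=10$ times. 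Since $t_u\le 10\le 18-3$, Lemma~\ref{2-thread} then applies to the $2$-thread $c\,b_2\,b_1\,u$ of $G-a_1$ (with $u$ as the $v_4$-endpoint), giving an extension to $G-a_1$ in which $b_1$ is recolored at most $t_u+3\le 13$ times. Finally, the Key Lemma inserts $a_1$ (degree~$2$ in $G$ with neighbours $v,u$), bounded by $t_v+t_u+1\le 4+10+1=15\le 18$. Every vertex of $H$ retains its bound of~$18$, producing an $18$-good sequence of $G$ and the desired contradiction.

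In the second case, $u$ is also $3_{1,1,1}$ with $1$-threads $u\,b_1\,x_1$ and $u\,b_2\,x_2$, and the same $S$ works. After the identical first step, the path $x_1\,b_1\,u\,b_2\,x_2$ is a $3$-thread in $G-a_1$ with $u$ as middle (since its third neighbour $a_1$ is absent), so Lemma~\ref{3-thread} gives an extension with $t_u\le 4$. Inserting $a_1$ via the Key Lemma then yields $t_v+t_u+1\le 4+4+1=9\le 18$, again producing an $18$-good sequence of $G$.

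The main obstacle is the asymmetry in the first case: unlike $v$ (and unlike $u$ in the second case), the $3_{2,1,0}$-vertex $u$ does not sit in the middle of a natural $3$-thread, so Lemma~\ref{3-thread} cannot bound $u$'s recolorings in a single step. The workaround is to insert $u$ via the Key Lemma at a moment when its effective degree is only~$1$ and then bootstrap via Lemma~\ref{2-thread}; verifying that the hypothesis $t_u\le k-3$ holds at that point and that the final bound $t_v+t_u+1$ for $a_1$ stays below~$18$ is the principal numerical check, and the insertion order is essentially forced by this arithmetic.
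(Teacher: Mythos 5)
Your proof is correct and uses exactly the reducible-configuration toolkit (Lemma~\ref{3-thread}, Lemma~\ref{2-thread}, and the Key Lemma applied after deleting a small vertex set and invoking minimality of $G$) that the paper itself uses to prove Lemmas~\ref{3000}--\ref{42210}. Since the paper merely cites this statement from~\cite{C22} without reproducing Cranston's argument, your write-up is a valid self-contained verification in the paper's own style, with all numerical bounds ($v\le 4$, $u\le 10$ or $u\le 4$, $b_1\le 13$, $a_1\le 15$ or $a_1\le 9$) comfortably within the $18$-good threshold.
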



\begin{lemma}\label{3000}
Each $3$-vertex $v$ in $G$ is adjacent to at most one $3_{2,1,0}$-vertex.
\end{lemma}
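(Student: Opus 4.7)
Suppose for contradiction that a $3$-vertex $v$ is adjacent to two $3_{2,1,0}$-vertices $u_1,u_2$, with third neighbor $w$. For $i\in\{1,2\}$, let $a_i$ be the $2$-vertex internal to $u_i$'s $1$-thread (with $3^{+}$-endpoint $a_i'$), and let $b_i,b_i'$ be the two $2$-vertices internal to $u_i$'s $2$-thread (with $3^{+}$-endpoint $b_i''$). I will set $S=\{v,u_1,u_2,a_1,a_2,b_1,b_1',b_2,b_2'\}$ and $H=G-S$. Since $\operatorname{mad}(H)\le\operatorname{mad}(G)<5/2$, the minimality of $G$ supplies an $18$-good $L$-recoloring sequence $\sigma_H$ for $H$.

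The plan is to extend $\sigma_H$ to $G$ in five stages. In \emph{Stage 1} I would add $v$ by the Key Lemma: in $H+v$ only $w$ is adjacent to $v$, so $d(v)=1$ and $v$ is recolored at most $\lceil 18/2\rceil+1=10$ times. In \emph{Stage 2} I would add $u_1$ and $a_1$ together by applying Lemma~\ref{2-thread} to the $2$-thread $a_1'a_1u_1v$ in $H+\{v,u_1,a_1\}$, taking $v$ as endpoint $v_4$; note that $u_1$ has degree $2$ in that subgraph since $b_1\notin H+\{v,u_1,a_1\}$. With $s=10\le 18-3$, this yields $u_1\le 13$ and $a_1\le 18$. \emph{Stage 3} would symmetrically add $u_2,a_2$ via the $2$-thread $a_2'a_2u_2v$ (with $s=v\le 10$), giving $u_2\le 13$ and $a_2\le 18$. \emph{Stages 4 and 5} would apply Lemma~\ref{2-thread} to $u_i$'s genuine $2$-thread $b_i''b_i'b_iu_i$ with $u_i$ as $v_4$; since $s=u_i\le 13\le 15$, this produces $b_i\le 16$ and $b_i'\le 18$. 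Every vertex of $G$ would then be recolored at most $18$ times, producing an $18$-good sequence for $G$ and contradicting the minimality of $G$.

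The principal technical point will be justifying that the $s$-value demanded by each application of Lemma~\ref{2-thread} equals the count from the previous sequence (not an inflated one): Stage 3 relies on $v\le 10$ from Stage 1, and Stage 4 relies on $u_1\le 13$ from Stage 2. This works under the standard interpretation of Lemma~\ref{2-thread}, in which the new $k$-good sequence is produced by augmenting $\sigma'$ with new recolorings only of the two new internal vertices $v_2,v_3$, leaving the counts of $v_1,v_4$ and of every other vertex unchanged. A secondary concern is that degenerate identifications can collapse the chosen $2$-thread---the most delicate being $a_1=a_2$, which forces $a_1'=u_2$. In each such case the affected stage is replaced by a direct Key Lemma application; for instance, if $a_1=a_2$ then both its $G$-neighbors lie in $S$, so $a_1$ is added first at degree $0$ (giving $a_1\le 1$), and then $u_1,u_2$ are each added by the Key Lemma at degree $2$ with $t\le v+a_1\le 11$, yielding $u_i\le 12\le 15$, which remains compatible with Stages $4$--$5$.
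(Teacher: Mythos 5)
Your proposal is correct and follows essentially the same route as the paper: the paper likewise deletes the nine vertices $\{v,u_1,u_2,a_1,a_2,b_1,b_1',b_2,b_2'\}$, then re-inserts $v$ via the Key Lemma (getting $v\le 10$), then each $u_i$ together with $a_i$ via Lemma~\ref{2-thread} (getting $u_i\le 13$), and finally the $2$-thread interiors $b_i,b_i'$ via Lemma~\ref{2-thread} again, with the identical arithmetic $10=\lceil 18/2\rceil+1$ and $13=10+3\le 18-3$. Your explicit remark that Lemma~\ref{2-thread} leaves the recoloring counts of already-placed vertices unchanged, and your treatment of the degenerate identification $a_1=a_2$, are both sound and go slightly beyond what the paper spells out, but they do not change the argument.
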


\begin{proof}
Let $N(v)=\{v_1,v_2,v_3\}$. Suppose otherwise that $v$ is adjacent to at least two $3_{2,1,0}$-vertices, by symmetry say $v_1,v_2$. Form $G'$ from $G$ by deleting the interior vertices of the two $2$-threads incident to $v_1$ and $v_2$. Form $G''$ from $G'$ by deleting $v_i$ and the interior vertex of a $1$-thread incident to $v$ for each $i\in[2]$. Let $G'''$=$G''$-$v$. By minimality, $G'''$ has a $18$-good L-recoloring sequence that transforms $\alpha_{\upharpoonright G'''}$ to $\beta_{\upharpoonright G'''}$. Since $v_3$ is recolored at most $18$ times, by the Key lemma, we can extend $\sigma'''$ to a $18$-good L-recoloring sequence $\sigma''$ for $G''$ that recolors $v$ at most $\lceil{\frac{18}{2}}\rceil+1=10$ times  and transforms $\alpha_{\upharpoonright G''}$ to $\beta_{\upharpoonright G''}$. By Lemma~\ref{2-thread}, we can extend $\sigma''$ to a $18$-good $L$-recoloring sequence $\sigma'$ for $G'$ that transforms $\alpha_{\upharpoonright G'}$ to $\beta_{\upharpoonright G'}$ and recolors each of $v_1$ and $v_2$ at most $10+3=13$ times. By applying Lemma~\ref{2-thread} for each $2$-thread incident to $v_1$ and $v_2$, we can extend $\sigma'$ to a $18$-good L-recoloring sequence $\sigma$ for $G$ that transforms $\alpha$ to $\beta$.
\end{proof}

\begin{lemma}\label{3100}
Let $v$ be a $3_{1,0,0}$-vertex in $G$ with a $3_{2,1,0}$-neighbor. Then $v$ cannot be  weak adjacent to a $3_{1,1,1}$-vertex or adjacent to a bad $3_{1,1,0}$-vertex.
\end{lemma}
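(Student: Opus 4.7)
I would argue by contradiction, splitting into two cases according to which conjunct of the conclusion fails. Write $w$ for the $3_{2,1,0}$-neighbor of $v$, with interior $2$-vertex $w_1'$ on its $1$-thread to $w_1$ and $w_2', w_2''$ on its $2$-thread to $w_2$; write $v_3'$ for the $2$-vertex on $v$'s unique $1$-thread with far endpoint $v_3$, and $v_2$ for the remaining direct neighbor of $v$. In both cases I delete a carefully chosen set $S$, invoke minimality of $G$ to obtain an $18$-good recoloring $\sigma'$ of $G - S$, and then extend $\sigma'$ back to $G$ one piece at a time, using the Key Lemma on deleted $2$-vertices and Lemmas~\ref{2-thread} and~\ref{3-thread} where longer thread structures appear in the intermediate graphs.

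\textbf{Case A} (say $v_3$ is a $3_{1,1,1}$-vertex): let $x_1', x_2'$ be the interior $2$-vertices of the two remaining $1$-threads at $v_3$, with far endpoints $x_1, x_2$, and set $S := \{v, w, v_3, v_3', x_1', x_2', w_1', w_2', w_2''\}$. I extend $\sigma'$ in five stages. (i) Reinsert $v$ while $w$ and $v_3'$ are still absent, so $d(v) = 1$ with $v_2$ as its only intermediate neighbor; the Key Lemma bounds $v$ by $\lceil 18/2\rceil + 1 = 10$. (ii) Apply Lemma~\ref{3-thread} to the $3$-thread $x_1 - x_1' - v_3 - x_2' - x_2$, bounding $v_3$ by $4$. (iii) Reinsert $v_3'$ via the Key Lemma, bound $\lceil(10+4)/1\rceil + 1 = 15$. (iv) Apply Lemma~\ref{2-thread} to the $2$-thread $v - w - w_1' - w_1$ with endpoint $v_4 = v$ (count $10 \le k-3 = 15$), bounding $w$ by $10 + 3 = 13$. (v) Apply Lemma~\ref{2-thread} to $w - w_2' - w_2'' - w_2$ with $v_4 = w$ (count $13 \le 15$), bounding $w_2'$ by $16$. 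Every bound is at most $18$, contradicting minimality.

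\textbf{Case B} (say $v_2$ is a bad $3_{1,1,0}$-vertex, weakly adjacent to a $3_{1,1,1}$-vertex $y_1$ via $y_1'$): with the analogous notation $y_2', y_2$ for $v_2$'s other $1$-thread and $z_1', z_1, z_2', z_2$ for $y_1$'s remaining two $1$-threads, set $S := \{v, w, v_2, v_3', w_1', w_2', w_2'', y_1, y_1', y_2', z_1', z_2'\}$. The extension parallels Case A: first apply Lemma~\ref{3-thread} to the $3$-thread $z_1 - z_1' - y_1 - z_2' - z_2$ (bounding $y_1$ by $4$); then chain Key Lemma applications through $y_2'$ (bound $\le 10$), $v_2$ (bound $\le 6$), and $y_1'$ (bound $\le 11$); then apply Lemma~\ref{2-thread} to the $2$-thread $v_2 - v - v_3' - v_3$ with $v_4 = v_2$ (count $6 \le 15$), bounding $v$ by $9$; and finally finish the $w$-side by two further Lemma~\ref{2-thread} applications exactly as in steps (iv) and (v) of Case A, giving $w$ a bound of $12$ and $w_2'$ a bound of $15$.

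The main obstacle, common to both cases, is bounding $v$'s own recoloring count strictly below $k - 3 = 15$; otherwise the Lemma~\ref{2-thread} application that reconstructs the $2$-thread through $w$ cannot use $v$ as its restricted endpoint $v_4$. I handle this by reinserting $v$ at a stage when only one of its three $G$-neighbors is present in the intermediate graph — directly via the Key Lemma with $v_2$ as sole neighbor in Case A, and indirectly via Lemma~\ref{2-thread} on $v_2 - v - v_3' - v_3$ anchored by the already tightly bounded $v_2$ in Case B. Everything else is bookkeeping, driven by the standard pattern that Lemma~\ref{2-thread} propagates an endpoint count $s \le 15$ into a bound $s + 3$ on the closer deleted internal.
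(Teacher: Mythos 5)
Your proposal is correct. For the first part of the lemma (your Case A), your deletion set $S$ and your stage-by-stage reinsertion order coincide exactly with the paper's: Key Lemma on $v$ (bound $10$), Lemma~\ref{3-thread} on the $3_{1,1,1}$-vertex (bound $4$), Key Lemma on the $2$-vertex $v_3'$ (bound $15$), then two applications of Lemma~\ref{2-thread} along the $3_{2,1,0}$-neighbor's threads. For the second part (your Case B), your deletion set is again the same as the paper's, but the reinsertion order is genuinely different: you rebuild the bad $3_{1,1,0}$-vertex's side first via a chain of Key Lemma applications (through $y_2'$, $v_2$, $y_1'$, getting bounds $10,6,11$), then $v$ and its $1$-thread via Lemma~\ref{2-thread} anchored at $v_2$ (bound $9$), and finally the $3_{2,1,0}$-neighbor's $1$- and $2$-threads via two more Lemma~\ref{2-thread} applications; the paper instead reinserts the $3_{2,1,0}$-neighbor and the $3_{1,1,1}$-vertex first (claiming two applications of Lemma~\ref{3-thread}), then $v$, then the bad vertex's second $1$-thread, and finishes with the Key Lemma on the lone $2$-vertex between the bad vertex and its weak $3_{1,1,1}$-neighbor. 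Both orderings yield $18$-good sequences; if anything, your Case B routing is tidier, because the paper's second Lemma~\ref{3-thread} application is at a $3_{2,1,0}$-vertex whose $2$-thread has had both interior vertices removed, so the structure reinserted is really a $4$-thread and an extra Key-Lemma step is implicitly needed there --- your Key Lemma/$2$-thread chain avoids that wrinkle entirely. One small inaccuracy in your wrap-up: Lemma~\ref{2-thread} only requires $s\le k-3$, not $s<k-3$, so the threshold is $15$, not strictly below $15$; your bounds of $10$ and $9$ satisfy either version.
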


\begin{proof}
Let $N(v)=\{v_1,v_2,v_3\}$ with $d(v_1)=2$. Let $u_1$ be the neighbor of $v_1$ distinct from $v$ and $v_2$ be the $3_{2,1,0}$-neighbor of $v$. First assume to the contrary that $v$ is weak adjacent to a $3_{1,1,1}$-vertex.
Form $G'$ from $G$ by deleting the interior vertices of the $2$-thread incident to $v_2$. Form $G''$ from $G'$ by deleting $v_1,v_2$ and the interior vertex of $1$-thread incident to $v_2$, Form $G'''$ from $G''$ by deleting $v,u_1$ and the interior vertices of two $1$-threads incident to $u_1$. By minimality, $G'''$ has a $18$-good L-recoloring sequence that transforms $\alpha_{\upharpoonright G'''}$ to $\beta_{\upharpoonright G'''}$. We can extend $\sigma'''$ to $18$-good L-recoloring sequence $\sigma''$ for $G''$ that recolors $v$ at most $\lceil{\frac{18}{2}}\rceil+1=10$ times by the Key lemma and recolors $u_1$ at most $4$ times by Lemma~\ref{3-thread} and transforms $\alpha_{\upharpoonright G''}$ to $\beta_{\upharpoonright G''}$. We can extend $\sigma''$ to a $18$-good L-recoloring sequence $\sigma'$ for $G'$ that recolors $v_1$ at most $\lceil\frac{10+4}{4-2-1}\rceil+1=15$ times by the Key lemma and recolors $v_2$ at most $10+3=13$ times by Lemma~\ref{2-thread} and transforms $\alpha_{\upharpoonright G'}$ to $\beta_{\upharpoonright G'}$. By applying Lemma~\ref{2-thread} for the $2$-thread incident to $v_2$, we can extend to a $18$-good L-recoloring sequence for $G$ that transforms $\alpha$ to $\beta$.

Now suppose otherwise that $v_3$ is a bad $3_{1,1,0}$-vertex. Let $v_3'$ be the weak $3_{1,1,1}$-neighbor of $v_3$. Let $N(v_3)=\{v,u,w\}$ and $N(w)=\{v_3,v_3'\}$. Let $G'=G-w$, $G''=G'-\{u,v_3\}$ and $G'''=G''-\{v,v_1\}$. Form $G''''$ from $G'''$ by deleting $v_2$ and the interior vertices of $1$- and $2$-threads incident to $v_2$ and deleting $v_3'$ and the vertices in $N(v_3')-w$. By minimality, $G''''$ has a $18$-good L-recoloring sequence that transforms $\alpha_{\upharpoonright G''''}$ to $\beta_{\upharpoonright G''''}$. We can extend $\sigma''''$ to a $18$-good L-recoloring sequence $\sigma'''$ for $G'''$ that recolors each of $v_2,v_3'$ at most $4$ times by applying Lemma~\ref{3-thread} twice and transforms $\alpha_{\upharpoonright G'''}$ to $\beta_{\upharpoonright G'''}$. We can extend $\sigma'''$ to a $18$-good L-recoloring sequence $\sigma''$ for $G''$ that recolors $v$ at most $4+3=7$ times by Lemma~\ref{2-thread} and transforms $\alpha_{\upharpoonright G''}$ to $\beta_{\upharpoonright G''}$. We can extend $\sigma''$ to a $18$-good L-recoloring sequence $\sigma'$ for $G'$ that recolors $v_3$ at most $7+3=10$ times by Lemma~\ref{2-thread} and transforms $\alpha_{\upharpoonright G'}$ to $\beta_{\upharpoonright G'}$. By applying the Key Lemma to $v_2$, We can extend to a $18$-good $L$-recoloring sequence for $G$ that recolors $v_2$ at most $\lceil\frac{10+4}{4-2-1}\rceil+1=15$ times and transforms $\alpha$ to $\beta$.
\end{proof}

\begin{lemma}\label{3200}
A bad $3_{1,1,0}$-vertex $v$ cannot be adjacent to  a $3_{2,0,0}$-vertex or $3_{1,1,0}$-vertex.
\end{lemma}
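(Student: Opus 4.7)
The plan is to follow the template of Lemma~\ref{3100}: assume the forbidden configuration exists, delete a carefully chosen local substructure, invoke minimality of $G$ on the resulting graph, and rebuild an $18$-good $L$-recoloring sequence on $G$ by a chain of extensions using the Key Lemma and Lemmas~\ref{2-thread} and~\ref{3-thread}. Write $N(v)=\{x_1,x_2,u\}$ with $d(x_1)=d(x_2)=2$; let $z_1$ be the $3_{1,1,1}$-weak-neighbor of $v$ via $x_1$, with $N(z_1)=\{x_1,y_1,y_2\}$; and let $z_2,y_1',y_2'$ denote the far endpoints of the $1$-threads through $x_2,y_1,y_2$. In Case~(i), $u$ is $3_{1,1,0}$ with $N(u)=\{v,m_1,m_2\}$ and $n_i$ the far endpoint of the $1$-thread through $m_i$. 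In Case~(ii), $u$ is $3_{2,0,0}$ with $N(u)=\{v,u',t_1\}$ and $2$-thread $u-t_1-t_2-s$.

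For Case~(i), I would form $G'=G-x_1$, $G''=G'-\{v,x_2\}$, $G'''=G''-\{m_1,u,m_2\}$, and $G''''=G'''-\{z_1,y_1,y_2\}$, invoking minimality on $G''''$. The extensions are: Lemma~\ref{3-thread} on the $3$-thread $y_1'-y_1-z_1-y_2-y_2'$ (valid since $x_1\notin G''''$) yielding $z_1\le 4$; Lemma~\ref{3-thread} on $n_1-m_1-u-m_2-n_2$ — which is a $3$-thread in $G''$ because $v\notin G''$ forces $d_{G''}(u)=2$ — yielding middle $u\le 4$; Lemma~\ref{2-thread} on $u-v-x_2-z_2$ with $v_4:=u\le 4\le k-3$, yielding $v\le u+3\le 7$; and the Key Lemma on $x_1$ (with $d_G(x_1)=2$), yielding $x_1\le v+z_1+1\le 12$. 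Every count is at most $18$, a contradiction.

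For Case~(ii), I would insert one extra deletion level: $G'=G-x_1$, $G''=G'-\{v,x_2\}$, $G'''=G''-\{t_1,t_2\}$, $G''''=G'''-u$, and $G^{(5)}=G''''-\{z_1,y_1,y_2\}$. The $z_1$-extension is identical and still gives $z_1\le 4$. The crucial next step is to observe that in $G'''$ the vertex $u$ has only the single surviving neighbor $u'$ (both $v$ and $t_1$ lie outside $G'''$), so $d_{G'''}(u)=1$ and the Key Lemma with denominator $4-1-1=2$ yields $u\le \lceil 18/2\rceil +1=10$. Two applications of Lemma~\ref{2-thread} with $v_4:=u\le 10\le k-3$ then propagate: one on $s-t_2-t_1-u$ gives $t_1\le 13$, and one on $z_2-x_2-v-u$ gives $v\le 13$. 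The concluding Key Lemma on $x_1$ yields $x_1\le v+z_1+1\le 13+4+1=18$, and again every vertex is recolored at most $18$ times.

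The main obstacle will be Case~(ii): the neighbor $u'$ of $u$ is a $3^+$-vertex of unknown degree, so one cannot simply delete $u'$ and hope to reinsert it via the Key Lemma (which would require $d_G(u')\le 2$). The resolution is to leave $u'$ in the working graph throughout and instead isolate $u$ by first deleting $\{v,t_1\}$ around it; then $u$ has degree $1$ at the moment of reinsertion and the Key Lemma produces the sharp bound $u\le 10$. That $10\le k-3=15$ is precisely what the subsequent Lemma~\ref{2-thread} applications require, and the resulting $v\le 13$ together with $z_1\le 4$ makes the final Key Lemma on $x_1$ land at exactly $18$.
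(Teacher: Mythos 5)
Your proposal is correct and takes essentially the same route as the paper: in each case you delete the same final set of vertices (just in finer stages), you reinsert $u$ at the moment it has a single surviving neighbor so the Key Lemma gives $\lceil 18/2\rceil+1=10$, you propagate through the $2$-threads via Lemma~\ref{2-thread} to get $13$, and you finish with the Key Lemma at the degree-$2$ vertex $x_1$ to land at $12$ in Case~(i) and exactly $18$ in Case~(ii), matching the paper's bounds.
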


\begin{proof}
Let $N(v)=\{v_1,v_2,v_3\}$ with $d(v_1)=d(v_2)=2$. Let $v_1'$ be the weak $3$-neighbor of $v$ and $N(v_1')=\{v_1,u,w\}$.

Suppose otherwise that $v_3$ is a $3_{2,0,0}$-vertex with a $2$-neighbor $v_3'$. Let $G'=G-v_1$. Form $G''$ from $G'$ by deleting $v,v_2$ and the interior vertices of the $2$-thread incident to $v_3$. Form $G'''$ from $G''$ by deleting $v_3,v_1',u,w$. By minimality, $G'''$ has a $18$-good L-recoloring sequence that transforms $\alpha_{\upharpoonright G'''}$ to $\beta_{\upharpoonright G'''}$. We can extend $\sigma'''$ to a $18$-good L-recoloring sequence $\sigma''$ for $G''$ that recolors $v_1'$ at most $4$ times by Lemma~\ref{3-thread} and $v_3$ at most $\frac{18}{2}+1=10$ times by the Key Lemma and transforms $\alpha_{\upharpoonright G''}$ to $\beta_{\upharpoonright G''}$. We can extend $\sigma''$ to a $18$-good L-recoloring sequence $\sigma'$ for $G'$ that recolors each of $v,v_3'$ at most $10+3=13$ times by Lemma~\ref{2-thread} and transforms $\alpha_{\upharpoonright G'}$ to $\beta_{\upharpoonright G'}$. Now by the Key lemma, we can extend $\sigma'$ to a $18$-good L-recoloring sequence $\sigma$ for $G$ that recolors $v_1$ at most $\lceil\frac{13+4}{4-2-1}\rceil+1=18$ times and transforms $\alpha$ to $\beta$.

Suppose otherwise that $v_3$ is a $3_{1,1,0}$-vertex. Let $G'=G-v_1$ and $G''=G'-\{v,v_2\}$. Form $G'''$ from $G''$ by deleting $v_3,v_1',u,w$ and the interior vertices of two $1$-threads incident to $v_3$. By minimality, $G'''$ has a $18$-good L-recoloring sequence that transforms $\alpha_{\upharpoonright G'''}$ to $\beta_{\upharpoonright G'''}$. We can extend $\sigma'''$ to a $18$-good L-recoloring sequence $\sigma''$ for $G''$ that recolors each of $v_3$ and $v_1'$ at most $4$ times by applying Lemma~\ref{3-thread} twice and transforms $\alpha_{\upharpoonright G''}$ to $\beta_{\upharpoonright G''}$. We can extend $\sigma''$ to a $18$-good L-recoloring sequence $\sigma'$ for $G'$ that recolors $v$ at most $4+3=7$ times by Lemma~\ref{2-thread} and transforms $\alpha_{\upharpoonright G'}$ to $\beta_{\upharpoonright G'}$. Now by the Key lemma, we can extend $\sigma'$ to a $18$-good L-recoloring sequence $\sigma$ for $G$ that recolors $v_1$ at most $\lceil\frac{7+4}{4-2-1}\rceil+1=12$ times and transforms $\alpha$ to $\beta$.
\end{proof}

\begin{lemma}\label{3110}
Each $3_{1,1,0}$-vertex $v$ is weak adjacent to at most one $3_{1,1,1}$-vertex.
\end{lemma}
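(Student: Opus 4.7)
The plan is to follow the template of Lemmas~\ref{3000}--\ref{3200}: assume for contradiction that the $3_{1,1,0}$-vertex $v$ is weak adjacent to two distinct $3_{1,1,1}$-vertices $v_1'$ and $v_2'$. Let $N(v)=\{v_1,v_2,v_3\}$ with $d(v_1)=d(v_2)=2$ and $d(v_3)\ge3$, where $v_i$ is the interior $2$-vertex on the $1$-thread from $v$ to $v_i'$ for $i\in\{1,2\}$. Write $N(v_i')=\{v_i,u_i,w_i\}$ with $u_i,w_i$ being the other two $2$-neighbors of $v_i'$, and let $u_i'$ (resp.\ $w_i'$) denote the $3^+$-vertex at the far end of the $1$-thread through $u_i$ (resp.\ $w_i$).

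I would delete the configuration in three layers. Set $G_1:=G-\{v_1,v_2\}$, $G_2:=G_1-v$, and $G_3:=G_2-\{v_1',u_1,w_1,v_2',u_2,w_2\}$. Since $G_3$ is smaller than $G$, by minimality it has an $18$-good $L$-recoloring sequence $\sigma_3$ transforming $\alpha_{\upharpoonright G_3}$ to $\beta_{\upharpoonright G_3}$. Because $v_i$ has been removed in $G_2$, the vertex $v_i'$ has degree $2$ in $G_2$, so $u_i'u_iv_i'w_iw_i'$ is a $3$-thread in $G_2$ for each $i$. Two successive applications of Lemma~\ref{3-thread}, first to the $v_1'$-thread and then to the $v_2'$-thread, extend $\sigma_3$ to an $18$-good sequence $\sigma_2$ for $G_2$ in which each of $v_1',v_2'$ is recolored at most $4$ times.

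To climb back to $G$, I would apply the Key Lemma twice. In $G_1$, the vertex $v$ has only $v_3$ as a neighbor, and $v_3$'s recolorings in $\sigma_2$ are at most $18$. Hence the Key Lemma (with $|L(v)|-d_{G_1}(v)-1=2$) yields an $18$-good sequence $\sigma_1$ for $G_1$ in which $v$ is recolored at most $\lceil 18/2\rceil+1=10$ times. Finally, for each $i\in\{1,2\}$ the vertex $v_i$ satisfies $d_G(v_i)=2$ and $N_G(v_i)=\{v,v_i'\}$, whose total recoloring count in $\sigma_1$ is at most $10+4=14$; another application of the Key Lemma then extends $\sigma_1$ to an $18$-good sequence $\sigma$ for $G$ with $v_i$ recolored at most $\lceil 14/1\rceil+1=15$ times. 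Since every bound is at most $18$, this contradicts the minimality of $G$.

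The main obstacle is keeping every Key Lemma step within the budget $18$. The tight inequality is the $15\le 18$ bound at the $2$-vertices $v_i$: the divisor $|L(v_i)|-d_G(v_i)-1$ equals $1$, so each recoloring of a neighbor of $v_i$ transfers one-for-one to $v_i$'s own count. This is precisely why I reintroduce $v_1',v_2'$ via Lemma~\ref{3-thread} first (capping them at $4$) and then reintroduce $v$ alone via the Key Lemma (capping it at $10$) before re-adding the $2$-vertices $v_1,v_2$. Any coincidences among $v_3,u_i',w_i'$ can only reduce the effective neighbor counts and therefore help rather than hurt the argument.
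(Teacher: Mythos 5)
Your proof is correct and follows essentially the same approach as the paper: delete $v_1,v_2$, then delete $v$ together with the two $3$-threads through $v_1',v_2'$, invoke minimality, rebuild via two applications of Lemma~\ref{3-thread} (giving $4$ recolorings for $v_1',v_2'$), the Key Lemma at $v$ (giving $10$), and finally the Key Lemma at each $v_i$ (giving $15\le 18$). The only cosmetic difference is that you split the paper's single extension from $G''$ to $G'$ into two explicit layers $G_3\to G_2\to G_1$, which changes nothing in the bounds.
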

\begin{proof}
Let $N(v)=\{v_1,v_2,v_3\}$ with $d(v_1)=d(v_2)=2$. Suppose to the contrary that $v_1',v_2'$ are two $3_{1,1,1}$-vertices weak adjacent to $v$. Let $G'=G-\{v_1,v_2\}$. Form $G''$ from $G'$ by deleting $v,v_1',v_2'$ and the vertices in $N(v_1')\cup N(v_2')-\{v_1,v_2\}$. By minimality, $G''$ has a $18$-good L-recoloring sequence that transforms $\alpha_{\upharpoonright G''}$ to $\beta_{\upharpoonright G''}$. Then we can extend $\sigma''$ to a $18$-good L-recoloring sequence $\sigma'$ for $G'$ that recolors $v$ at most $\frac{18}{2}+1=10$ times by the Key Lemma and recolors each of $v_1',v_2'$ at most 4 times by applying Lemma~\ref{3-thread} twice and transforms $\alpha_{\upharpoonright G'}$ to $\beta_{\upharpoonright G'}$. We extend $\sigma'$ to a $18$-good L-recoloring sequence for $G$ that recolors each of $v_1,v_2$ at most $\lceil\frac{10+4}{4-2-1}\rceil+1=15$ times and transforms $\alpha$ to $\beta$.
\end{proof}

\begin{lemma}\label{42210}
Each $4_{2,2,1,0}$-vertex $v$ is not adjacent to a $3_{2,1,0}$-vertex.
\end{lemma}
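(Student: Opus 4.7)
The plan is to assume for contradiction that the $4_{2,2,1,0}$-vertex $v$ is adjacent to a $3_{2,1,0}$-vertex $u$, and then to construct an $18$-good $L$-recoloring sequence of $G$ from minimality, using the Key Lemma together with four applications of Lemma~\ref{2-thread}. Label the interiors of the threads around $v$ and $u$: write the two maximal $2$-threads at $v$ as $v a_1 a_2 A$ and $v b_1 b_2 B$, the $1$-thread at $v$ as $v c_1 C$, the $2$-thread at $u$ as $u d_1 d_2 D$, and the $1$-thread at $u$ as $u e_1 E$. Set $G':=G-\{v,u,a_1,a_2,b_1,b_2,c_1,d_1,d_2,e_1\}$; by minimality, $G'$ admits an $18$-good $L$-recoloring sequence $\sigma'$ transforming $\alpha_{\upharpoonright G'}$ to $\beta_{\upharpoonright G'}$.

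First I extend $\sigma'$ by adding back $c_1$ and then $v$ via the Key Lemma: in $G' \cup \{c_1\}$ the vertex $c_1$ has only $C$ as a neighbor, so it is recolored at most $\lceil 18/2 \rceil+1=10$ times, and in $G' \cup \{c_1,v\}$ the vertex $v$ has only $c_1$ as a neighbor, so it is recolored at most $\lceil 10/2 \rceil+1=6$ times. The key observation is that in $G_3:=G-\{a_1,a_2,b_1,b_2,d_1,d_2\}$, the four-vertex path $E e_1 u v$ is a $2$-thread, since both interior vertices $e_1, u$ have degree $2$ in $G_3$ (even though $u$ has degree $3$ in $G$). Applying Lemma~\ref{2-thread} to this $2$-thread with $v_4 := v$ and $s := 6 \le 15$ extends the sequence to $G_3$ and recolors $u$ at most $6+3=9$ times. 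Finally, the three remaining $2$-threads $v a_1 a_2 A$, $v b_1 b_2 B$, and $u d_1 d_2 D$ are inserted by three further applications of Lemma~\ref{2-thread}: the first two use $s=6$, bounding each of $a_1, b_1$ by $9$, and the third uses $s=9$, bounding $d_1$ by $12$. Since each extension preserves the bounds already achieved on other vertices, the resulting sequence is $18$-good on $G$, contradicting the minimality of $G$.

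The main obstacle I expect is ensuring that both $v$ and $u$ reach the final round of $2$-thread extensions having been recolored at most $15$ times, the largest value of $s$ that Lemma~\ref{2-thread} accepts when $k=18$. The naive strategy---adding $c_1,v,e_1,u$ back to $G'$ by four successive Key Lemma calls---always leaves one of $\{v,u\}$ with final degree $2$ in the relevant subgraph and hence at most $\lceil 16/1 \rceil+1=17$ recolorings, already too many to feed into the $2$-thread lemma at that vertex. Bringing $u$ and $e_1$ back in a single application of Lemma~\ref{2-thread} on the path $E e_1 u v$, using the already-established bound of $6$ on $v$, replaces this bound of $17$ for $u$ by $s+3=9$, which is exactly what the three subsequent $2$-thread extensions require.
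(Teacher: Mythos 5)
Your proof is correct, and it starts from the same inductive base as the paper's: the vertex set you delete to form $G'$ is exactly the paper's $G'''$. The difference is the order of reinsertion. The paper first restores $u$ together with the interiors of all of its threads (bounding $u$ by $4$; as written this treats $E e_1 u d_1 d_2 D$, a $4$-thread of $G''$, as a $3$-thread, so it implicitly needs a preliminary Key Lemma call on $d_2$ before Lemma~\ref{3-thread} applies), then restores $c_1,v$ as the $2$-thread $C c_1 v u$ to get $v\le 7$, and finishes with two more $2$-thread extensions. You work from $v$'s side instead: two Key Lemma calls give $v\le 6$, the observation that $E e_1 u v$ is a $2$-thread of $G-\{a_1,a_2,b_1,b_2,d_1,d_2\}$ gives $u\le 9$ via Lemma~\ref{2-thread} with $s=6$, and three further $2$-thread extensions give $a_1,b_1\le 9$ and $d_1\le 12$. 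Your route relies only on the Key Lemma and Lemma~\ref{2-thread}, never on Lemma~\ref{3-thread}, and you correctly diagnose the real obstacle: four bare Key Lemma insertions leave one of $v,u$ with final degree $2$ and a bound of $17$, which exceeds the $s\le k-3=15$ requirement of the $2$-thread lemma.
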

\begin{proof}
Let $N(v)=\{v_1,v_2,v_3,v_4\}$ with $d(v_1)=d(v_2)=d(v_3)=2$ and $v_3$ is in a $1$-thread. Suppose otherwise that $v$ is adjacent to a $3_{2,1,0}$-vertex $v_4$. Form $G'$ from $G$ by deleting the interior vertices of the two $2$-threads incident to $v$. Form $G''$ from $G'$ by deleting $v,v_3$. Form $G'''$ from $G''$ by deleting $v_4$ and the interior vertices of the $1$- and $2$-threads incident to $v_4$. By minimality, $G'''$ has a $18$-good L-recoloring sequence that transforms $\alpha_{\upharpoonright G'''}$ to $\beta_{\upharpoonright G'''}$. We can extend $\sigma'''$ to a $18$-good L-recoloring sequence $\sigma''$ for $G''$ that recolors $v_4$ at most $4$ times and transforms $\alpha_{\upharpoonright G''}$ to $\beta_{\upharpoonright G''}$. We can extend $\sigma''$ to a $18$-good L-recoloring sequence $\sigma'$ for $G'$ that recolors $v$ at most $4+3=7$ times by Lemma~\ref{2-thread} and transforms $\alpha_{\upharpoonright G'}$ to $\beta_{\upharpoonright G'}$.  Again by applying Lemma~\ref{2-thread} twice, we can extend $\sigma'$ to a $18$-good L-recoloring sequence $\sigma$ for $G$ that transform $\alpha$ to $\beta$.
\end{proof}

We will finish Theorem~\ref{main1} by way of discharging method.
For each vertex $v\in V(G)$, let its initial charge be $\mu(v)=d(v)$. Then $\Sigma_{v\in V(G)}<\frac{5}{2}|V(C)|$ and we will redistribute the charge among vertices so that each vertex $v$ ends with charge at least $\mu^*(v)\ge{\frac{5}{2}}$, which is a contradiction. We use the following discharging rules.

\begin{itemize}
\item[(R1)] Each $3^+$-vertex gives $\frac{1}{4}$ to each nearby $2$-vertex.

\item[(R2)] Each $3^+$-vertex gives $\frac{1}{4}$ to each adjacent $3_{2,1,0}$-vertex.

\item[(R3)] Every $3^+$-vertex gives $\frac{1}{12}$ to each adjacent bad $3_{1,1,0}$-vertex.

\item[(R4)] Each $3^+$-vertex gives $\frac{1}{12}$ to each weak adjacent  $3_{1,1,1}$-vretex.
\end{itemize}

Let $v$ be a $k$-vertex in $G$. By Lemma~\ref{minimumdegree}, $k\ge2$. If $k=2$, then by Lemma~\ref{3-thread} $v$ is a nearby $2$-vertex of two $3^+$-vertices. By (R1) $v$ gets $\frac{1}{4}$ from each adjacent $3^+$-vertex. So $\mu^*(v)=2+{\frac{1}{4}}\times2=\frac{5}{2}$.
If $k\ge5$, then let $N(v)=\{v_i:1\le i\le5\}$. For each $i\in[5]$, $v_i$ is either on a $2$-thread $vv_iu_iw$ or on a $1$-thread $vv_iw$ or $d(v_i)\ge3$. In the first case, $v$ gives total $\frac{1}{4}\times2=\frac{1}{2}$ to $v_i,u_i$ by (R1). In the second case, $v$ gives at most $\frac{1}{4}+\frac{1}{12}=\frac{1}{3}$ to $v_i,w$ by (R1)(R4). In the last case, $v$ gives at most $\frac{1}{4}$ to $v_i$ by (R2)-(R3). So  $\mu^*(v)\ge d(v)-\max\{\frac{1}{2}, \frac{1}{3},\frac{1}{4}\}d(v)\ge\frac{5}{2}$. So it is sufficient to show that $\mu^*(v)\ge0$ for $k=3,4$.

For convenience, for a $k$-vertex $v$ with $k\in\{3,4\}$, let $v$ be a $k_{a_1,a_2,\cdots,a_k}$-vertex with $N(v)=\{v_1,v_2,\cdots,v_k\}$ such that $v_i$ is on a $a_i$-thread for each $i\in[k]$. Without loss of generality, assume that $a_1\ge a_2\ge\cdots\ge a_k$. By Lemma~\ref{3-thread} $a_1\le2$. Denote $n_2(v)=a_1+a_2+\cdots+a_k$. Let $N(v_i)=\{v,u_i\}$ if $d(v_i)=2$.

{\bf Case 1.} $d(v)=3$. By Lemma~\ref{nearby2} $n_2(v)\le3$.

$\bullet$ $n_2(v)=0$. Then $v$ is a $3_{0,0,0}$-vertex. By Lemma~\ref{3000}, $v$ is adjacent to at most one  $3_{2,1,0}$-vertex, if has, say $v_1$. By (R2)-(R4) $v$ gives at most $\frac{1}{4}$ to $v_1$, at most $\frac{1}{12}$ to each of $v_2,v_3$. So $\mu^*(v)\ge3-\frac{1}{4}-\frac{1}{12}\times2=\frac{31}{12}>\frac{5}{2}$.

$\bullet$ $n_2(v)=1$.  Then $v$ is a $3_{1,0,0}$-vertex. If $v$ is adjacent to a $3_{2,1,0}$-vertex, say $v_2$, then $v_3$ cannot be a $3_{2,1,0}$-vertex by Lemma~\ref{3000}, and by Lemma~\ref{3100} $u_1$ cannot be a $3_{1,1,1}$-vertex and $v_3$ cannot be a  a bad $3_{1,1,0}$-vertex. So $v$ gives nothing to $u_1$ and $v_3$. By (R1)(R2) $v$ gives $\frac{1}{4}$ to each of $v_1$ and $v_2$. So $\mu^*(v)\ge3-\frac{1}{4}\times2=\frac{5}{2}$. Otherwise, $v$ gives $\frac{1}{4}$ to $v_1$ by (R1) and at most $\frac{1}{12}$ to each of $u_1,v_2,v_3$ by (R3)(R4). So $\mu^*(v)\ge3-\frac{1}{4}-\frac{1}{12}\times3=\frac{5}{2}$.

$\bullet$ $n_2(v)=2$. Then $v$ is a $3_{2,0,0}$-vertex or $3_{1,1,0}$-vertex.
If $v$ is a $3_{2,0,0}$-vertex, then by Lemma~\ref{3210} $v$ cannot be adjacent to a $3_{2,1,0}$-vertex and by Lemma~\ref{3200} $v$ cannot be adjacent to a bad $3_{1,1,0}$-vertex. So $v$ gives nothing to each of $v_2$ and $v_3$. By (R1) $v$ gives $\frac{1}{4}$ to each of $v_1,u_1$. So $\mu^*(v)\ge3-{\frac{1}{4}}\times2=\frac{5}{2}$. So we may assume that $v$ is a $3_{1,1,0}$-vertex. By Lemma~\ref{3210} $v_3$ cannot be a $3_{2,1,0}$-vertex and by  Lemma~\ref{3200} $v_3$ cannot be a bad $3_{1,1,0}$-vertex. So $v$ gives nothing to $v_3$. By (R1) $v$ gives $\frac{1}{4}$ to each of $v_1,v_2$. By  Lemma~\ref{3110} at most one of $u_1$ and $u_2$ is a $3_{1,1,1}$-vertex. If none of $u_1, u_2$ is  a $3_{1,1,1}$-vertex, then $\mu^*(v)\ge3-\frac{1}{4}\times2=0$. If one of $u_1, u_2$ is  a $3_{1,1,1}$-vertex, say $u_1$, then $v$ is a bad $3_{1,1,0}$-vertex. So $v$ gives $\frac{1}{12}$ to $u_1$ by (R4) and gets $\frac{1}{12}$ from $v_3$ by (R3). So $\mu^*(v)\ge3-\frac{1}{4}\times2-\frac{1}{12}+\frac{1}{12}=0$.

$\bullet$ $n_2(v)=3$. If $v$ is a $3_{2,1,0}$-vertex, then $v_3$ cannot a $3_{2,1,0}$-vertex or bad $3_{1,1,0}$-vertex by Lemma~\ref{3210} and ~\ref{3200}. So $v$ gives nothing to $v_3$. By Lemma~\ref{3111} $u_2$ cannot be a $3_{1,1,1}$-vertex. So $v$ gives nothing to $u_2$. By (R1)(R2) $v$ gives $\frac{1}{4}$ to each of $v_1,u_1,v_2$ and gets $\frac{1}{4}$ from $v_3$. So $\mu^*(v)\ge3-{\frac{1}{4}}\times3+\frac{1}{4}=\frac{5}{2}$. If $v$ is a $3_{1,1,1}$-vertex, then by Lemma~\ref{3111} each of $u_1,u_2,u_3$ cannot be a $3_{1,1,1}$-vertex. So $v$ gives $\frac{1}{4}$ to each of $v_1,v_2,v_3$ by (R1) and gets $\frac{1}{12}$ from each of $u_1,u_2,u_3$ by (R4). So $\mu^*(v)\ge3-{\frac{1}{4}}\times3+\frac{1}{12}\times3=\frac{5}{2}$.

{\bf Case 2.} $d(v)=4$. By Lemma~\ref{nearby2} $n_2(v)\le5$.

$\bullet$ $n_2(v)\le3$. Then $a_1\le2, a_2\le1,a_3\le1,a_4=0$. So by (R1)-(R4)$v$ gives at most $\frac{1}{4}\times2$ to $v_1,u_1$, at most $\frac{1}{4}+\frac{1}{12}=\frac{1}{3}$ to $v_i,u_i$ for each $i\in\{2,3,4\}$. So $\mu^*(v)\ge4-\frac{1}{4}\times2-\frac{1}{3}\times3=\frac{5}{2}$.

$\bullet$ $n_2(v)=4$. Then $v$ is either a $4_{2,2,0,0}$-vertex or a $4_{2,1,1,0}$-vertex or a $4_{1,1,1,1}$-vertex. By (R1) $v$ give $\frac{1}{2}$ to each nearby $2$-vertex. In the first case, $v$ gives at most $\frac{1}{4}$ to each of $v_3,v_4$ by (R2)(R3). In the second  case, $v$ gives at most $\frac{1}{12}$ to each of $v_2,v_3$ and at most $\frac{1}{4}$ to $v_4$ by (R2)(R3). In the last case, $v$ gives at most $\frac{1}{12}$ to each $v_i$ for $i\in[4]$ by (R4). So $\mu^*(v)\ge4-\frac{1}{4}\times4-\max\{\frac{1}{4}\times2,\frac{1}{4}+\frac{1}{12}\times2,\frac{1}{12}\times4\}=\frac{5}{2}$.

$\bullet$ $n_2(v)=5$. Then $v$ is either a $4_{2,2,1,0}$-vertex or a $4_{2,1,1,1}$-vertex. By (R1) $v$ give $\frac{1}{2}$ to each nearby $2$-vertex. In the former case, $v_4$ cannot be a $3_{2,1,0}$-vertex by Lemma~\ref{42210}. So by (R3)(R4) $v$ give at most $\frac{1}{12}$ to each of $u_3,v_4$. So $\mu^*(v)\ge4-\frac{1}{4}\times5-\frac{1}{12}\times2=\frac{31}{12}>\frac{5}{2}$.
In the latter case, by (R4) $v$ gives at most $\frac{1}{12}$ to each of $u_2,u_3,u_4$. So $\mu^*(v)\ge4-\frac{1}{4}\times5-\frac{1}{12}\times3=\frac{5}{2}$.

\end{document}